\newcolumntype{L}{>{\displaystyle}l} \newcolumntype{C}{>{\displaystyle}c} \newcolumntype{R}{>{\displaystyle}r}
\newtheorem{theorem}{Theorem}[section]
\newtheorem{proposition}[theorem]{Proposition}
\newtheorem{lemma}[theorem]{Lemma}
\newtheorem{corollary}[theorem]{Corollary}
\theoremstyle{definition}
\newtheorem{example}[theorem]{Example}
\newtheorem{problem}[theorem]{Problem}
\theoremstyle{remark}
\newtheorem{remark}{Remark}
\numberwithin{equation}{section}
\setlist{leftmargin=5.5mm}
\DeclareMathOperator{\Id}{Id}
\newcommand{\Kb}{\Bbbk}
\newcommand{\cB}{\mathcal{B}}
\newcommand{\RR}{\mathbb{R}}
\newcommand{\opp}[1]{\overline{#1}}
\newcommand{\GP}{\mathrm{GP}}
\newcommand{\hc}{\mathrm{C}}
\newcommand{\rmF}{\mathrm{F}}
\newcommand{\rmA}{\mathrm{A}}
\newcommand{\rmP}{\mathrm{P}}
\newcommand{\rmW}{\mathrm{W}}
\newcommand{\bbX}{\mathbb{X}}
\newcommand{\p}{\mathfrak{p}}
\newcommand{\q}{\mathfrak{q}}
\newcommand{\f}{\mathfrak{f}}
\newcommand{\ass}{\mathfrak{a}}
\newcommand{\cyc}{\mathfrak{c}}
\renewcommand{\path}{\mathtt{p}}
\newcommand{\apode}{\mathbbm{s}}
\newcommand{\cycle}{\mathtt{c}}
\newcommand{\qqand}{\qquad\text{and}\qquad}
\newcommand{\face}{\Sigma}
\definecolor{colblue}{HTML}{007CFC}
\definecolor{colgreen}{HTML}{7DC636}
\definecolor{colred}{HTML}{D20000}
\definecolor{colorange}{HTML}{F59723}
\newcommand\red[1]{{\color{colred}{#1}}}
\newcommand\blue[1]{{\color{colblue}{#1}}}
\keywords{Hopf monoids, antipode, graph associahedra, associahedron, cyclohedron, Lagrange inversion}
\begin{document}

\title{Associahedra, Cyclohedra and inversion of power series}

\author[M.~Aguiar]{Marcelo Aguiar}
\address{Department of Mathematics\\
Cornell University\\
Ithaca, NY 14853}
\email{\href{mailto:maguiar@math.cornell.edu}{maguiar@math.cornell.edu}}
\urladdr{\url{http://www.math.cornell.edu/~maguiar}}

\author[J.~Bastidas]{Jose Bastidas}
\address{Department of Mathematics\\
Cornell University\\
Ithaca, NY 14853}
\email{\href{mailto:bastidas@math.cornell.edu}{bastidas@math.cornell.edu}}
\urladdr{\url{https://sites.google.com/view/bastidas}}

\begin{abstract}
We introduce the Hopf monoid of sets of cycles and paths, which contains the Faà di Bruno Hopf monoid as a submonoid. We give cancellation-free and grouping-free formulas for its antipode, one in terms of tubings and one in terms of \emph{pointed} noncrossing partitions. We provide an explicit description of the group of characters of this Hopf monoid in terms of pairs of power series. Using graph associahedra, we relate paths and cycles to associahedra and cyclohedra, respectively. We give formulas for inversion in the group of characters in terms of the faces of these polytopes.
\end{abstract}

\maketitle

\section*{Introduction}

Combinatorial species~\citep{joyal} provide a unified framework to study families of combinatorial objects. When a family of combinatorial objects has compatible operations to \emph{merge} and \emph{break} structures, they can be studied under the scope of \emph{Hopf monoids} in the category of species~\citep{am10,am13}. Aguiar and Ardila~\citep{aa17} introduced the Hopf monoid of generalized permutahedra~$\GP$, which encompasses several Hopf monoids that had previously been studied on a case by case basis. The quotient of~$\GP$ modulo normal equivalence of polytopes is denoted~$\opp{\GP}$.

The submonoid~$\opp{\rmA}$ of~$\opp{\GP}$ generated by (classes of) associahedra is one of the main constructions in~\citep{aa17}. Its group of characters~$\bbX(\opp{\rmA})$ is isomorphic to the group of power series~$f(x) = x + a_1x^2 + a_2x^3 + \dots$ under composition. By solving the antipode problem for~$\GP$, and therefore for~$\opp{\rmA}$, they explained the connection between the classical Lagrange inversion formula and the face structure of the associahedron, thus solving a question posted by Loday.

The main goal of this paper is to develop a parallel result simultaneously involving the associahedron and the cyclohedron.

Low dimensional cyclohedra and associahedra are isomorphic. To bypass this technical difficulty, we work in the more general context of graphs. We introduce the Hopf monoid of sets of paths and cycles~$\hc$. Structures in~$\hc$ are (not necessarily simple) graphs whose connected components are paths or cycles. We give cancellation-free and grouping-free formulas for the antipode of~$\hc$. The former in terms of tubings (Theorem~\ref{t:apode_cycles_tub}), and the later in terms of \emph{pointed} noncrossing partitions (Theorem~\ref{t:apode_cycles_Cat}). We compute the character group~$\bbX(\hc)$ of this Hopf monoid.

\newtheorem*{t:charC}{Theorem~\ref{t:chars_hc}}
\begin{t:charC}
The group of characters of~$\hc$ is isomorphic to the group of pairs of power series
$$
\big(g(x),h(x)\big) = \Big( x + \sum_{n\geq 1} a_n x^{n+1},\sum_{n\geq 1} c_n \dfrac{x^n}{n} \Big)
$$
with product defined by
$$
\big(g_1(x),h_1(x)\big) \cdot \big(g_2(x),h_2(x)\big) = \big( g_1(g_2(x)) , h_1(g_2(x)) + h_2(x) \big).
$$
\end{t:charC}

Using graph associahedra~\citep{cd06grass}, we relate this result to the character group of the Hopf submonoid of~$\opp{\GP}$ generated by associahedra and cyclohedra.

This document is structured as follows. The necessary tools from the theory of Hopf monoids, generalized permutahedra, and graph associahedra are reviewed in Section~\ref{s:prelims}. We introduce the Hopf monoid of sets of cycles and paths~$\hc$ in Section~\ref{s:hc}

and solve the antipode problem for~$\hc$ in Section~\ref{s:apode_C}.

Section~\ref{s:chars_C} describes the group of characters of this Hopf monoid.

In Section~\ref{s:ass_cyc_ps}, we combine the result of the previous sections to obtain formulas for inversion in this group of pairs of power series in terms of the face structure of associahedra and cyclohedra. This allows us to present some interesting new combinatorial identities involving pointed noncrossing partitions and Catalan numbers.

The investigation of the group of characters of cyclohedra was undertaken independently by our colleagues and friends Federico Ardila, Carolina Benedetti, and Rafael González D'León. They have informed us of obtaining similar results to ours. We thank them for their graciousness and appreciation of our work.

\section{Preliminaries}\label{s:prelims}

\subsection{Generalized permutahedra}

Let~$V$ be a finite dimensional real vector space endowed with an inner product~$\langle \,\cdot\, , \cdot\, \rangle$.
For a polytope~$\p \subseteq V$ and a vector~$v \in V$, let~$\p_v$ denote the face of~$\p$ maximized in the direction~$v$.
That is,
$
\p_v = \{ p \in \p \,:\, \langle p , v \rangle \geq \langle q , v \rangle \text{ for all } q \in \p\}.
$

The (outer) \textbf{normal cone} of a face~$\f$ of~$\p$ is the polyhedral cone
$
N(\f,\p) = \{ v \in V \mid \f \leq \p_v\}.
$
The \textbf{normal fan} of~$\p$ is the collection~$\face_\p = \{ N(\f,\p) \,:\, \f \leq \p\}$ of normal cones of~$\p$.
There is a natural order-reversing correspondence between faces of~$\p$ and cones in~$\face_\p$, as illustrated in Figure~\ref{f:poly_max}.

\begin{figure}[ht]
\includegraphics[scale=1]{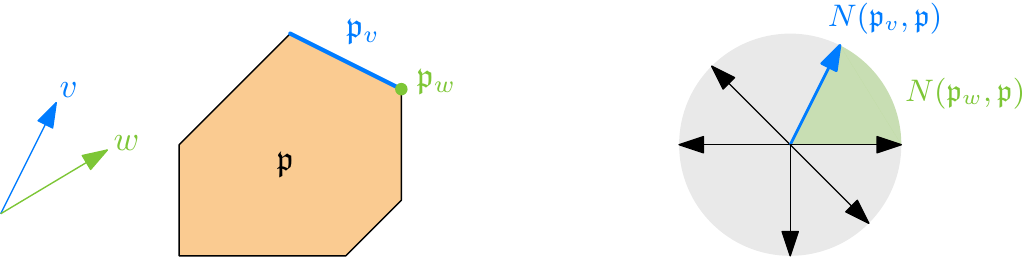}
\caption{A 2-dimensional polytope~$\p$ and two of its faces~$\p_v,\p_w$ maximized in directions~$v,w$, respectively.
			On the right, the normal fan~$\face_\p$.}\label{f:poly_max}
\end{figure}

Two polytopes~$\p$ and~$\q$ are said to be \textbf{normally equivalent}, denoted~$\p \equiv \q$, if~$\face_\p = \face_\q$.
If, on the other hand,~$\face_\p$ \textbf{refines}~$\face_\q$, we say that~$\q$ is a \textbf{deformation} of~$\p$.
Recall that a fan~$\face$ refines~$\face'$ if every cone in~$\face'$ is a union of cones in~$\face$.

Fix a finite set~$I$ and let~$\{e_i \mid i \in I\}$ denote the standard basis of~$\RR^I$.
A \textbf{composition} of~$I$ is a tuple~$F = (S_1 , \dots , S_k)$ of nonempty sets such that~$I = S_1 \sqcup \dots \sqcup S_k$, the sets~$S_i$ are the \textbf{blocks} of~$F$.
We say a block~$S_i$ (weakly) \textbf{precedes}~$S_j$ if it appears before in~$F$ (or if they are equal).
We associate the following polyhedral cone to a composition~$F$
$$
\sigma_F =
\{ x \in \RR^I \,:\,
x_i \geq x_j \text{ if the block containing } i \text{ weakly precedes the block containing } j \}.
$$
The \textbf{braid fan}~$\cB_I$ is the polyhedral fan consisting of all the cones of the form~$\sigma_F$.

\begin{figure}[h]
\includegraphics[scale=.8]{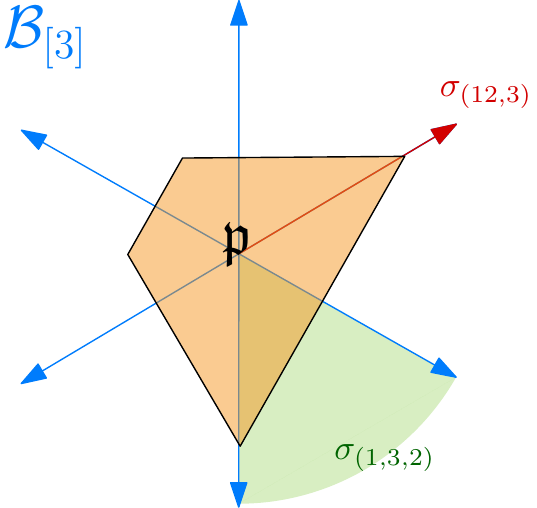}
\caption{The braid fan~$\cB_{[3]}$ and a generalized permutahedron~$\p$ in~$\RR^{[3]}$. Two cones of~$\cB_{[3]}$ are labeled by the corresponding composition of~$[3]$. For instance, the cone~$\sigma_{(12,3)}$ in red consists of those points~$x$ satisfying~$x_1 = x_2 \geq x_3$.}
\end{figure}

A \textbf{generalized permutahedron} of~$I$ is a polytope~$\p \subseteq \RR^I$ whose normal fan is refined by the braid fan~$\cB_I$.
This implies that the face of a generalized permutahedron~$\p \subseteq \RR^I$ maximized in some direction~$\sum_{i \in I} a_i e_i$ depends only on the relative order of the values~$a_i$, and not on the values themselves.

For example, the face of a generalized permutahedron~$\p \subseteq \RR^3$ maximized in the direction~$(1,2,2)$ is the same as the face maximized in the direction~$(-3,1,1)$, but potentially different from the one maximized by~$(2,1,2)$.

Generalized permutahedra were first introduced by Edmonds~\citep{edmonds} in the context of optimization of submodular functions, and have been of central interest to combinatorialists because they serve as polyhedral models for several families of combinatorial structures. They have been extensively studied by Postnikov~\citep{postnikov09}, Postnikov, Reiner, and Williams~\citep{prw08faces}, and many others. Aguiar and Ardila~\citep{aa17} endowed the family of generalized permutahedra with the structure of a Hopf monoid in the category of species. We review their construction in the following section.

\subsection{Hopf monoids and generalized permutahedra}

Combinatorial species were originally introduced by Joyal in~\citep{joyal} as a tool for studying generating power series from a combinatorial perspective.

Aguiar and Mahajan~\citep{am10,am13} studied monoidal structures on the category of species, in particular Hopf monoids, and exploited this rich algebraic structure to obtain outstanding combinatorial results.
This section presents a concise introduction to Hopf monoids in species.

\subsubsection{Hopf monoids}

Let~${\sf set}^\times$ denote the category of finite sets with bijections as morphisms, and~${\sf Set}$ the category of sets and arbitrary set functions.
A \textbf{(set) species}~$\rmP$ is a functor~${\sf set}^\times \rightarrow {\sf Set}$.
Explicitly, a species~$\rmP$ consists of the following data:
\begin{itemize}
\item For each finite set~$I$, a set~$\rmP[I]$.
		Elements~$x \in \rmP[I]$ are called \textbf{structures} of type~$\rmP$ on~$I$.
\item For each bijection~$\sigma : I \rightarrow J$, a map~$\rmP[\sigma] : \rmP[I] \rightarrow \rmP[J]$.
\end{itemize}
These maps satisfy~$\rmP[\sigma \circ \tau] = \rmP[\sigma] \circ \rmP[\tau]$ and~$\rmP[\Id] = \Id$.
Two structures~$x \in \rmP[I]$ and~$y \in \rmP[J]$ are \textbf{isomorphic} if~$y = \rmP[\sigma](x)$ for some bijection~$\sigma : I \rightarrow J$. We denote this by~$x \cong y$.

A species~$\rmP$ is said to be \textbf{connected} if~$\rmP[\emptyset]$ consists of exactly one element.
In this document we will only work with connected species, and let~$\epsilon$ denote the only structure in~$\rmP[\emptyset]$.
As it is customary, we write~$[n] = \{1,2,\dots,n\}$ and~$\rmP[n] := \rmP[[n]]$.

A species~$\rmP$ is a \textbf{Hopf monoid} if for any finite set~$I$ and decomposition~$I = S \sqcup T$, there are maps
$$
\begin{gathered}
\begin{array}{CCCC}
\mu_{S,T} : & \rmP[S] \times \rmP[T] & \rightarrow & \rmP[I] \\
& (x,y) & \mapsto & x \cdot y
\end{array}
\end{gathered}
\qqand
\begin{gathered}
\begin{array}{CCCC}
\Delta_{S,T} : & \rmP[I] & \rightarrow & \rmP[S] \times \rmP[T] \\
& z & \mapsto & (z|_S , z/_S)
\end{array}
\end{gathered}
$$
satisfying certain (co)unitality, (co)associativity, naturality and compatibility axioms.

Suppose~$I = S \sqcup T = J \sqcup K$ are two decompositions of the same set~$I$, and we have structures~$x \in \rmP[J]$ and~$y \in \rmP[K]$. Then, the compatibility axiom requires that
\begin{equation}\label{eq:comp}
(x \cdot y)|_S = x|_A \cdot y|_B
\qqand
(x \cdot y)/_S = x/_A \cdot y/_B,
\end{equation}
where~$A = S \cap J$ and~$B = S \cap K$.

The following is a graphical representation of~\eqref{eq:comp}:
$$
\includegraphics[scale=1]{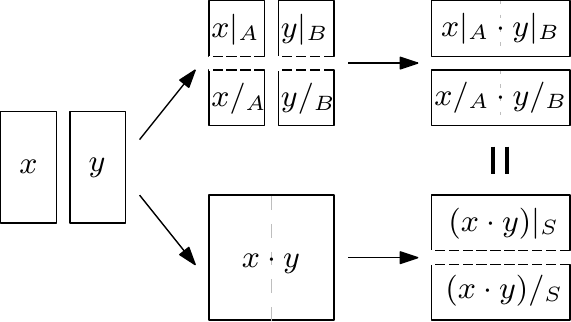}
$$

The \textbf{antipode}~$\apode$ of a Hopf monoid~$\rmP$ is the collection of linear maps
$
\apode_I : \Kb \rmP[I] \rightarrow \Kb \rmP[I],
$
defined inductively by:
		\begin{equation}\label{eq:mil_moo}
		\apode_I(x) = - \sum_{\substack{I = S \sqcup T \\ S \neq \emptyset}} x|_S \cdot \apode_T(x/_S)
		\end{equation}
for all~$x \in \rmP[I]$ with~$I \neq \emptyset$, and~$\apode_\emptyset = \Id$.

We refer to~\eqref{eq:mil_moo} as the Milnor and Moore formula.
In order to give meaning to the terms in the sum above, we have linearly extended the product~$\mu_{S,T}$ to get maps~$\Kb \rmP[S] \times \Kb \rmP[T] \rightarrow \Kb \rmP[I]$.

\begin{remark}
The antipode might not exists for a species that is not connected, even if it has compatible product and coproduct maps. In the case of connected species, the Milnor and Moore formula can be deduced from the more general definition of antipode \citep[Section 8.4]{am10}.
\end{remark}

In general, many cancellations occur among the~$2^{|I|}-1$ terms in~\eqref{eq:mil_moo}.
The antipode problem asks us to find a cancellation-free expression for the antipode of a given Hopf monoid.

\begin{problem}[The antipode problem]
Given a Hopf monoid~$\rmP$, find an explicit cancellation-free formula for the antipode~$\apode$ of~$\rmP$.
\end{problem}

The antipode is the Hopf monoid analog of the inverse map of a group. Just like the inverse map, it \emph{reverses} products~\citep[Proposition 1.22]{am10}.
That is, if~$I = S \sqcup T$,~$x \in \rmP[S]$ and~$y \in \rmP[T]$, then
\begin{equation}\label{eq:apode_prod}
\apode_I(x \cdot y) = \apode_T(y) \cdot \apode_S(x).
\end{equation}

\subsubsection{Characters}
A \textbf{character}~$\zeta$ of a Hopf monoid~$\rmP$ is a collection of linear maps~$\zeta_I : \Kb \rmP[I] \rightarrow \Kb$ satisfying
$$
\zeta_\emptyset(\epsilon) = 1\qquad\qquad \zeta_I(x \cdot y) = \zeta_S(x) \zeta_T(y)\qquad\qquad \zeta_I(x) = \zeta_J(y), \text{ whenever } x \cong y.
$$

The collection of characters~$\bbX(\rmP)$ of a connected Hopf monoid~$\rmP$ forms a group under \textbf{convolution product}.
If~$\zeta,\xi \in \bbX(\rmP)$ are two characters, their convolution~$\zeta*\xi$ is defined by
$$
(\zeta*\xi)_I(x) = \sum_{I = S \sqcup T} \zeta_S(x|_S)\xi_T(x/_S),
$$
for all~$x \in \rmP[I]$. The multiplicativity of~$\zeta*\xi$ follows from the compatibility axiom of~$\rmP$.
One easily verifies that the character~$u \in \bbX(\rmP)$ determined by
$$
u_I(x) = \begin{cases}
1 & \text{if } x = \epsilon,\\
0 & \text{if } I \neq \emptyset,
\end{cases}
$$
is the unit with respect to the convolution product.

\begin{theorem}[{\citep{am10}}]\label{t:inverse_chars}
The inverse of a character~$\zeta$ is given by the collection of maps
$$
\zeta^{*-1}_I = \zeta_I \circ \apode_I.
$$
\end{theorem}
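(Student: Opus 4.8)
The plan is to exhibit $\zeta^{*-1}$ explicitly by setting $\xi_I := \zeta_I \circ \apode_I$ and proving the two facts that, taken together, identify $\xi$ as the inverse: that $\xi$ is itself a character, and that $\zeta * \xi = u$. Since $\bbX(\rmP)$ is a group under convolution, once $\xi \in \bbX(\rmP)$ is shown to satisfy $\zeta * \xi = u$ on one side, it must coincide with the unique inverse $\zeta^{*-1}$, so no separate verification of $\xi * \zeta = u$ will be required.

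To see that $\xi$ is a character, I would check the three defining conditions in turn. Normalization is immediate: $\xi_\emptyset(\epsilon) = \zeta_\emptyset(\apode_\emptyset(\epsilon)) = \zeta_\emptyset(\epsilon) = 1$, using $\apode_\emptyset = \Id$. Isomorphism invariance follows because the antipode is natural (it commutes with the maps $\rmP[\sigma]$) and $\zeta$ is itself isomorphism invariant. Multiplicativity is the only point that uses the structure of $\apode$: for $x \in \rmP[S]$ and $y \in \rmP[T]$, the fact that the antipode reverses products~\eqref{eq:apode_prod} gives $\apode_I(x \cdot y) = \apode_T(y) \cdot \apode_S(x)$, whence $\xi_I(x \cdot y) = \zeta_I\big(\apode_T(y) \cdot \apode_S(x)\big) = \zeta_T(\apode_T(y))\,\zeta_S(\apode_S(x)) = \xi_S(x)\,\xi_T(y)$, the last equality by commutativity of $\Kb$.

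The core of the argument is the identity $\zeta * \xi = u$. Fix $x \in \rmP[I]$ with $I \neq \emptyset$ (the case $I = \emptyset$ being the normalization above), and expand
$$
(\zeta * \xi)_I(x) = \sum_{I = S \sqcup T} \zeta_S(x|_S)\,\zeta_T\big(\apode_T(x/_S)\big).
$$
I would isolate the term $S = \emptyset$, which equals $\zeta_I(\apode_I(x))$, and rewrite it using the Milnor and Moore formula~\eqref{eq:mil_moo}: applying the linear map $\zeta_I$ to $\apode_I(x) = -\sum_{S \neq \emptyset} x|_S \cdot \apode_T(x/_S)$ and using multiplicativity of $\zeta$ term by term yields $\zeta_I(\apode_I(x)) = -\sum_{S \neq \emptyset} \zeta_S(x|_S)\,\zeta_T(\apode_T(x/_S))$. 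Substituting this back into the expansion, the $S = \emptyset$ contribution exactly cancels the sum over $S \neq \emptyset$, so $(\zeta * \xi)_I(x) = 0 = u_I(x)$, and $\zeta * \xi = u$ as desired.

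The only point requiring care — a subtlety rather than a genuine obstacle — is that $\apode_T(x/_S)$ is an element of $\Kb\rmP[T]$ rather than a single structure, so that applying the character property term by term must be justified. This is legitimate because $\zeta$ is linear and the multiplicativity condition extends linearly along the product $\mu_{S,T}\colon \Kb\rmP[S] \times \Kb\rmP[T] \to \Kb\rmP[I]$. With this observation the computation is purely formal, and the identification $\zeta^{*-1}_I = \zeta_I \circ \apode_I$ follows.
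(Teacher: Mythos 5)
Your proof is correct: the paper gives no proof of this statement, citing~\citep{am10} instead, and your argument — check that $\xi_I = \zeta_I \circ \apode_I$ is a character via the antipode's product-reversal~\eqref{eq:apode_prod}, then telescope the convolution $\zeta * \xi$ against the Milnor and Moore recursion~\eqref{eq:mil_moo} and invoke uniqueness of inverses in the group $\bbX(\rmP)$ — is essentially the standard argument found there. Your flagged subtlety (extending multiplicativity of $\zeta$ linearly to $\Kb\rmP[T]$) is exactly the right point to justify, and the one-sided verification suffices precisely because the paper has already asserted that $\bbX(\rmP)$ is a group.
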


\subsubsection{The Hopf monoid of generalized permutahedra}

Let~$\GP$ denote the \textbf{Hopf monoid of generalized permutahedra}. As a species,~$\GP[I]$ consists of all generalized permutahedra in~$\RR^I$. It forms a Hopf monoid with the the following operations:
\begin{itemize}
\item If~$I = S \sqcup T$,~$\p \in \GP[S]$ and~$\q \in \GP[T]$, the Cartesian product~$\p \times \q \subseteq \RR^S \times \RR^T = \RR^I$ is a generalized permutahedron.
		The product of~$\GP$ is defined by
		$$
		\p \cdot \q
		= \p \times \q.
		$$
\item If~$I = S \sqcup T$ and~$\p \in \GP[I]$, the face of~$\p$ maximized in the direction~$\sum_{i \in S} e_i$ decomposes as a product~$\p|_S \times \p/_S$, where~$\p|_S \in \RR^S$ and~$\p/_S \in \RR^T$ are generalized permutahedra.
		The coproduct is defined by
		$$
		\Delta_{S,T}(\p) = (\p|_S,\p/_S).
		$$
\end{itemize}

The coassociativity of~$\GP$ is equivalent to the following \emph{greediness} property for generalized permutahedra.

\begin{proposition}\label{p:greedy}
Let~$\p \subseteq \RR^I$ be a generalized permutahedron and~$v = \sum_{i \in I} a_i e_i \in \RR^I$ be an arbitrary direction.
Let~$S_1, S_2, \dots, S_k$ be the composition of~$I$ such that the block containing~$i$ weakly precedes the block containing~$j$ whenever~$a_i \geq a_j$. Then, the face~$\p_v$ of~$\p$ maximized in the direction~$v$ can be obtained by sequentially maximizing~$\sum_{i \in S_l} e_i$, for~$l = 1,2\dots,k$.
\end{proposition}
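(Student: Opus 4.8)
The plan is to reduce the sequential maximization to a single maximization in a cleverly perturbed direction that lies in the same braid cone as~$v$, and then invoke the defining property of generalized permutahedra already recorded in the excerpt.

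First I would isolate an elementary lexicographic lemma, valid for an \emph{arbitrary} polytope~$\q \subseteq \RR^I$ and directions~$w,u \in \RR^I$: the iterated face~$(\q_w)_u$ equals~$\q_{w + \epsilon u}$ for all sufficiently small~$\epsilon > 0$. The point is that maximizing~$\langle\,\cdot\,, w + \epsilon u\rangle$ treats~$w$ as the primary criterion and~$u$ as a tie-breaker: once~$\epsilon$ is small enough (which is possible because~$\q$ has finitely many vertices, so the finitely many values of~$\langle\,\cdot\,, w\rangle$ that are not already maximal cannot be overtaken by the~$\epsilon u$ term), the maximizers of~$w + \epsilon u$ are exactly the points of~$\q_w$ that in turn maximize~$u$, which is precisely~$(\q_w)_u$.

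Next I would iterate this lemma with~$v_l = \sum_{i \in S_l} e_i$. Writing the sequential maximization as~$\bigl(\cdots(\p_{v_1})_{v_2}\cdots\bigr)_{v_k}$ and applying the lemma~$k-1$ times, I obtain~$\p_{w'}$ for a direction~$w' = \sum_{l=1}^{k} \lambda_l v_l$ whose coefficients can be arranged to satisfy~$\lambda_1 > \lambda_2 > \cdots > \lambda_k > 0$ (at each step the new perturbation parameter is chosen small relative to the partial sum already built). Concretely, the~$i$-th coordinate of~$w'$ is~$\lambda_{l(i)}$, where~$l(i)$ is the block of~$F$ containing~$i$, so the coordinates of~$w'$ are constant on each block~$S_l$ and strictly decrease from block to block. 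By the construction of the composition~$F = (S_1,\dots,S_k)$ the values~$a_i$ enjoy exactly the same pattern—constant on blocks, strictly decreasing across blocks—so~$v = \sum_i a_i e_i$ and~$w'$ record the identical relative order on~$I$, i.e. they lie in the relative interior of the same braid cone~$\sigma_F$. Since the face of a generalized permutahedron maximized in a given direction depends only on the relative order of that direction's coordinates, I conclude~$\p_v = \p_{w'}$, identifying~$\p_v$ with the sequential maximization.

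The main obstacle I anticipate is the bookkeeping in the lexicographic lemma and its iteration: one must verify that the perturbation parameters can be chosen uniformly small across the~$k-1$ steps—using the finiteness of the vertex sets of~$\p$ and of each successive face—so that every step genuinely passes to a face of the previous one, and that the accumulated coefficients~$\lambda_1,\dots,\lambda_k$ stay strictly ordered. Once this is pinned down, the geometric conclusion follows immediately from the property of generalized permutahedra quoted above, so the remaining steps are routine.
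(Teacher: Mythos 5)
Your proposal is correct and complete. Note that the paper itself gives no proof of Proposition~\ref{p:greedy}: it is stated as a known fact, presented as a reformulation of the coassociativity of~$\GP$ from Aguiar--Ardila, so there is no argument in the text to compare against; your write-up supplies the standard perturbation proof that is usually left implicit. The two ingredients are exactly right. The lexicographic lemma~$(\q_w)_u = \q_{w+\epsilon u}$ for small~$\epsilon>0$ holds for arbitrary polytopes by the finiteness-of-vertices argument you sketch (faces are determined by their vertex sets, and for~$\epsilon$ below the spectral gap~$\delta$ of~$\langle\,\cdot\,,w\rangle$ on vertices, the argmax of~$w+\epsilon u$ is the set of vertices of~$\q_w$ maximizing~$u$). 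Iterating with the pairwise disjointly supported directions~$v_l = \sum_{i\in S_l} e_i$ gives~$w' = \sum_l \lambda_l v_l$ with~$\lambda_1 > \cdots > \lambda_k > 0$ (choosing each~$\epsilon_{l+1}$ both small enough for the lemma and smaller than~$\epsilon_l$), and the construction of the composition~$F$ does force the~$a_i$ to be constant on blocks and strictly decreasing across blocks (if~$a_i = a_j$ the blocks weakly precede each other, hence coincide), so~$v$ and~$w'$ lie in the relative interior of the same braid cone~$\sigma_F$; the paper's observation that the maximized face of a generalized permutahedron depends only on the relative order of the coordinates of the direction then yields~$\p_v = \p_{w'}$. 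Your anticipated bookkeeping obstacle is genuinely routine: each application of the lemma only needs the finiteness of the vertex set of~$\p$ itself, since every iterated face is a face of~$\p$, so no uniform choice of~$\epsilon$ across steps is required.
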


Notably, Aguiar and Ardila found the following cancellation-free formula for the antipode of~$\GP$.

\begin{theorem}[{\citep[Theorem 7.1]{aa17}}]\label{t:apode_GP}
The following is a cancellation-free and grouping-free formula for the antipode of~$\GP$.
If~$\p \in \GP[I]$, then
$$
\apode_I(\p) = (-1)^{|I|} \sum_{\q \leq \p} (-1)^{\dim \q} \q,
$$
where the sum is over all the faces~$\q$ of~$\p$.
\end{theorem}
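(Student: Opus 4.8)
The plan is to turn the recursive Milnor and Moore formula~\eqref{eq:mil_moo} into a single alternating sum over compositions, identify each summand with a face of~$\p$, and then evaluate the net coefficient of each face through an Euler characteristic computation. First I would iterate~\eqref{eq:mil_moo}: substituting the formula for~$\apode_T(\p/_S)$ into itself and inducting on~$|I|$ yields Takeuchi's expansion
\[
\apode_I(\p) = \sum_{F = (S_1,\dots,S_k)} (-1)^{k}\;\p|_{S_1} \cdot (\p/_{S_1})|_{S_2} \cdots,
\]
where the sum runs over all compositions~$F$ of~$I$, the integer~$k$ is the number of blocks, and the summand is the iterated restriction and contraction determined by~$F$. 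By the greediness property (Proposition~\ref{p:greedy}), this iterated operation produces exactly the face of~$\p$ maximized in any direction~$v$ in the relative interior of the braid cone~$\sigma_F$; since~$\p$ is a generalized permutahedron, this face depends only on~$F$. Writing~$\p_F$ for this face and~$\ell(F)$ for its number of blocks, I obtain the compact form~$\apode_I(\p) = \sum_F (-1)^{\ell(F)}\,\p_F$.

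Next I would collect terms by face. Fix a face~$\q \leq \p$ and group together all compositions~$F$ with~$\p_F = \q$. Because the braid fan~$\cB_I$ refines the normal fan~$\face_\p$, the relative interior of each~$\sigma_F$ lies in the relative interior of a single normal cone, so~$\p_F = \q$ precisely when~$\mathrm{relint}\,\sigma_F \subseteq \mathrm{relint}\,N(\q,\p)$. Moreover~$\dim \sigma_F = \ell(F)$: the coordinates are constant on each block and weakly ordered across blocks, so~$\sigma_F$ spans a subspace of dimension equal to the number of blocks. Hence the coefficient of~$\q$ in the expansion is
\[
c_\q \;=\; \sum_{\mathrm{relint}\,\sigma_F \,\subseteq\, \mathrm{relint}\,N(\q,\p)} (-1)^{\dim \sigma_F}.
\]

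Finally I would evaluate~$c_\q$ geometrically. The relatively open braid cones contained in~$\mathrm{relint}\,N(\q,\p)$ partition it into locally closed pieces (once more because~$\cB_I$ refines~$\face_\p$), so~$c_\q$ is the compactly supported Euler characteristic of~$\mathrm{relint}\,N(\q,\p)$ read off from this decomposition: each relatively open~$d$-dimensional cone is homeomorphic to~$\RR^d$ and contributes~$\chi_c(\RR^d) = (-1)^d$. By additivity of~$\chi_c$ this gives~$c_\q = \chi_c(\mathrm{relint}\,N(\q,\p)) = (-1)^{\dim N(\q,\p)}$. The order-reversing duality between faces and normal cones yields~$\dim \q + \dim N(\q,\p) = |I|$, so~$c_\q = (-1)^{|I| - \dim \q} = (-1)^{|I|}(-1)^{\dim \q}$, exactly the coefficient in the claimed formula.

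The main obstacle is the sign and dimension bookkeeping of this last step: one must justify cleanly that the alternating sum over relatively open braid cones computes~$\chi_c$, and treat the degenerate and low-dimensional faces uniformly, keeping track of the common lineality line~$\RR\sum_i e_i$ shared by all cones of~$\cB_I$ (which shifts every dimension by one if one instead works with the pointed fan). Everything preceding this reduction is routine iteration of~\eqref{eq:mil_moo} together with the standard refinement properties of the braid fan.
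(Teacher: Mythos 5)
This theorem is quoted from \citep[Theorem 7.1]{aa17} and the paper under review gives no proof of its own, so the right comparison is with the Aguiar--Ardila argument, which your proposal reconstructs essentially verbatim: Takeuchi's expansion obtained by iterating~\eqref{eq:mil_moo}, identification of each summand with the face~$\p_F$ via Proposition~\ref{p:greedy}, grouping by faces using that the braid fan~$\cB_I$ refines~$\face_\p$ (so that~$\mathrm{relint}\,N(\q,\p)$ is partitioned by the relatively open braid cones mapping to~$\q$), and evaluation of the coefficient as the compactly supported Euler characteristic~$(-1)^{|I|-\dim \q}$ of the relatively open normal cone. Your sign and dimension bookkeeping is also the standard resolution ($\dim \sigma_F = \ell(F)$ and~$\dim \q + \dim N(\q,\p) = |I|$, both counts including the common lineality line~$\RR\sum_i e_i$, so the shifts cancel), and the only omitted point is the trivial observation that distinct faces are distinct elements of~$\GP[I]$ with coefficients~$\pm 1$, which is what makes the formula cancellation-free and grouping-free; the proposal is correct and takes essentially the same route as the cited proof.
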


Let~$\opp{\GP}$ denote the species of generalized permutahedra modulo normal equivalence. It is a Hopf monoid quotient of~$\GP$. Therefore, the preceding antipode formula still holds in~$\opp{\GP}$, but it is no longer grouping-free. Different faces of~$\p$ might be normally equivalent. For instance, the standard permutahedron in~$\RR^3$ has~$13$ faces, but they correspond to only~$5$ normal equivalence classes.

\subsection{Graph associahedra}\label{ss:g_ass}

Given a graph~$g$ with vertex set~$I$, Carr and Devadoss~\citep{cd06grass} construct a polytope~$\ass_g \subseteq \RR^I$, called the \emph{graph associahedron} of~$g$, whose face poset relates to the connected subgraphs (tubes) of~$g$. Not long after, Postnikov~\citep{postnikov09} introduced a more general family of polytopes associated to \emph{nested sets}. These polytopes are constructed as certain Minkowski sums of simplices. We will follow the second approach for definitions.

Let~$g$ be a graph on vertex set~$I$. Given a subset~$S \subseteq I$, let~$g \!:\! S$ denote the \textbf{induced subgraph} on~$S$. That is,~$g \!:\! S$ consists of vertices~$S$ and those edges of~$g$ having both endpoints in~$S$. The \textbf{graph associahedron} of~$g$ is
$$
\ass_g = \sum_{\substack{S \subseteq I \\ g : S \text{ is connected}}} \Delta_S,
$$
where~$\Delta_S$ is the simplex with vertices~$e_i$ for~$i \in S$. The dimension of~$\ass_g$ is~$|I| - c(g)$, where~$c(g)$ denotes the number of connected components of~$g$. Observe that the definition of~$\ass_g$ does not distinguish multiple edges or loops.

Let~$I = S \sqcup T$ and~$u,v \in S$. A \textbf{$T$-thread} joining~$u$ and~$v$ is a path in~$g$ with endpoints~$u,v$ all whose intermediate vertices, if any, are in~$T$. Let~$g|_S$ be the graph on vertex set~$S$ with edges~$\{u,v\} \subseteq S$ whenever there is a~$T$-thread joining~$u$ and~$v$. In particular,~$g|_S$ contains all the edges of the induced subgraph~$g \!:\! S$.

\begin{proposition}[\citep{aa17}]\label{p:faces_g_ass}
Let~$I = S \sqcup T$.
If~$g_1$ is a graph with vertex set~$S$ and~$g_2$ is a graph with vertex set~$T$,
then
$$
\ass_{g_1 \sqcup g_2} = \ass_{g_1} \times \ass_{g_2}.
$$
On the other hand, if~$g$ is a graph with vertex set~$I$, then
$$
\ass_g|_S \equiv \ass_{g|_S}
\qqand
\ass_g/_S = \ass_{g : T}.
$$
\end{proposition}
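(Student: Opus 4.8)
The plan is to read everything off the Minkowski-sum definition $\ass_g = \sum_{g:U \text{ connected}} \Delta_U$, using two standard facts about Minkowski sums: the face in a direction $v$ distributes as $(\p+\q)_v = \p_v + \q_v$, and the normal fan of a Minkowski sum is the common refinement of the normal fans of its summands (so positive dilations and repeated summands are invisible to the normal fan). I will also use that the face of $\Delta_U$ maximized in a direction $v = \sum_{i\in I} a_i e_i$ is $\Delta_{U'}$, where $U' \subseteq U$ is the set of vertices attaining $\max_{i\in U} a_i$.

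For the product formula, I would first observe that a subset $U \subseteq I$ induces a connected subgraph of $g_1 \sqcup g_2$ exactly when $U \subseteq S$ with $g_1 : U$ connected, or $U \subseteq T$ with $g_2 : U$ connected, since no edge of $g_1 \sqcup g_2$ crosses between $S$ and $T$. Splitting the defining Minkowski sum along this dichotomy writes $\ass_{g_1 \sqcup g_2}$ as the Minkowski sum of a copy of $\ass_{g_1}$ living in $\RR^S$ and a copy of $\ass_{g_2}$ living in $\RR^T$. Because these summands lie in complementary coordinate subspaces of $\RR^I = \RR^S \times \RR^T$, their Minkowski sum is exactly the Cartesian product $\ass_{g_1} \times \ass_{g_2}$.

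For the coproduct, I would compute the face of $\ass_g$ maximized in the direction $v = \sum_{i\in S} e_i$, which is $1$ on $S$ and $0$ on $T$. Distributing over the Minkowski sum and applying the simplex fact gives $(\Delta_U)_v = \Delta_{U\cap S}$ when $U \cap S \neq \emptyset$, and $(\Delta_U)_v = \Delta_U$ when $U \subseteq T$. The first kind of summand lies in $\RR^S$ and the second in $\RR^T$, so the face splits as a Cartesian product whose $\RR^S$-factor is $\ass_g|_S = \sum_{g:U \text{ conn},\, U\cap S \neq \emptyset} \Delta_{U\cap S}$ and whose $\RR^T$-factor is $\ass_g/_S = \sum_{U\subseteq T,\, g:U \text{ conn}} \Delta_U$. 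Since for $U \subseteq T$ the induced subgraphs $g:U$ and $(g:T):U$ coincide, the latter is literally $\ass_{g:T}$, which proves the contraction identity on the nose.

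The crux is the restriction identity, which holds only up to normal equivalence because distinct connected $U$ may share the same trace $U \cap S$, producing repeated simplices. By the common-refinement fact, the normal fans of $\ass_g|_S$ and $\ass_{g|_S}$ are the common refinements of the fans $\face_{\Delta_W}$ over, respectively, the collection $\mathcal{W} = \{U\cap S : g:U \text{ connected}\}$ (multiplicities ignored) and the collection of $W \subseteq S$ with $(g|_S):W$ connected. So it suffices to show these two collections of subsets coincide, which is the main combinatorial lemma and the one place the $T$-thread definition of $g|_S$ is essential. Given connected $g:U$ with $W = U\cap S$, any path in $g:U$ between two vertices of $W$ breaks at its $W$-vertices into maximal segments whose interior vertices lie in $U \setminus W \subseteq T$, i.e.\ into $T$-threads, so each consecutive pair is adjacent in $(g|_S):W$ and hence $(g|_S):W$ is connected. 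Conversely, given $(g|_S):W$ connected, I would choose a $T$-thread in $g$ for every edge of a spanning tree of $(g|_S):W$ and let $U$ consist of $W$ together with all interior (hence $T$-) vertices of these threads; then $g:U$ is connected and $U\cap S = W$. With the two collections identified, the common refinements agree and $\ass_g|_S \equiv \ass_{g|_S}$.
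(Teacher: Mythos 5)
Your proof is correct and complete, and since the paper states Proposition~\ref{p:faces_g_ass} without proof (citing~\citep{aa17}), the relevant comparison is with the cited source, whose argument proceeds exactly as yours does: distribute faces over the defining Minkowski sum of simplices, observe that the contraction identity holds on the nose while restriction only holds up to normal equivalence because distinct connected $U$ can share the trace $U \cap S$, and reduce the restriction identity to the combinatorial lemma matching traces of connected sets with connected sets of $g|_S$ via $T$-threads. Your two-directional thread argument (segmenting a path at its $W$-vertices, and conversely assembling $U$ from threads realizing a spanning tree) is precisely the key step there, so nothing is missing.
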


Faces of the graphic associahedron~$\ass_g$ are in bijection with \emph{tubings} of~$g$.

A \textbf{tube} of~$g$ is a collection of vertices~$S \subseteq I$ such that the induced graph~$g \!:\! S$ is connected.
A~\textbf{tubing}~$t$ is a collection of tubes such that:
\begin{itemize}
\item if~$S_1,S_2 \in t$, then either~$S_1 \subseteq S_2$,~$S_2 \subseteq S_1$, or~$S_1 \cap S_2 = \emptyset$,
\item if~$S_1,\dots,S_k \in t$ are pairwise disjoint, then~$S_1 \sqcup \dots \sqcup S_k$ is not a tube of~$g$, and
\item every connected component of~$g$ is in~$t$.
\end{itemize}
Thus, every tubing of~$g$ contains at least~$c(g)$ tubes.

A tubing~$t$ induces a partition~$\pi(t) = \{ \pi_1 , \dots , \pi_k \}$ of~$I$ and a graph~$g(t)$ whose connected components are the blocks of~$\pi(t)$ as follows. Two vertices~$i,j \in I$ belong to the same block of~$\pi(t)$ if and only if they are contained in the same set of tubes of~$t$. For each~$\pi_i \in \pi(t)$, let~$S_i \in t$ be the minimum tube containing~$\pi_i$, which is well-defined since tubes in~$t$ are either nested or disjoint. Let~$g_i$ be the graph on node set~$\pi_i$ having an edge between~$u,v \in \pi_i$ if
\begin{itemize}
\item~$\{u,v\}$ is an edge in~$g$, or
\item there is a~$T$-thread joining~$u$ and~$v$ for a tube~$T \in t$ strictly contained in~$S_i$.
\end{itemize}
Define~$g(t) = g_1 \sqcup g_2 \sqcup \dots \sqcup g_k$. Note that each graph~$g_i$ is connected, so~$\pi(t)$ is precisely the partition of~$I$ into connected components of~$g(t)$.

\begin{example}
Let us now consider the graph~$g$ on vertex set~$[6]$ and the tubing~$t = \{ 34, 2345 , [6] \}$ shown below.
We omit the tube~$[6] \in t$ from the picture.
$$
g = \begin{gathered} \includegraphics[scale=1,page=1]{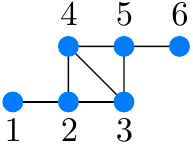} \end{gathered}
\qquad\qquad\qquad\qquad
t = \begin{gathered} \includegraphics[scale=1,page=2]{ex_tubes.pdf} \end{gathered}
$$
Then,~$\pi(t) = \{16,25,34\}$, and

$$
g(t) = \begin{gathered} \includegraphics[scale=1,page=3]{ex_tubes.pdf} \end{gathered}.
$$
Indeed,~$12356$ is a thread through the green tube joining~$1$ and~$6$, and~$245$ is a thread through the red tube joining~$2$ and~$5$.
\end{example}

The collection of all tubings of~$g$ is ordered by containment. That is,~$t \preceq t'$ if~$t$ can be obtained from~$t'$ by removing tubes. This collection has a minimum element: the tubing consisting of only the connected components of~$g$.

\begin{theorem}[{\citep{cd06grass,postnikov09}}]
Let~$g$ be a graph on vertex set~$I$.
There is a an order-reversing bijection between faces of~$\ass_g$ and tubings of~$g$.
If~$t$ is a tubing and~$F_t$ is the corresponding face, then~$\dim F_t = |I| - |t|$.
\end{theorem}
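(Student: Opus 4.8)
The plan is to identify the normal fan of $\ass_g$ with a fan indexed by tubings, and to read off both the bijection and the dimension formula from the Minkowski-sum description. First I would reduce to the case where $g$ is connected: by the product formula $\ass_{g_1 \sqcup g_2} = \ass_{g_1} \times \ass_{g_2}$ of Proposition~\ref{p:faces_g_ass}, faces of $\ass_g$ are products of faces of the $\ass_{g_i}$, while a tubing of a disjoint union splits uniquely into a tubing of each component (every tube is connected, hence lies in a single component, and the three tubing axioms decouple). Under this splitting $|t|$, $\dim$, and the containment order are all additive, so the connected case implies the general one.

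For connected $g$ I would use that $\ass_g = \sum_{g:S \text{ connected}} \Delta_S$ is a Minkowski sum of simplices, hence a generalized permutahedron; its faces are therefore the faces $(\ass_g)_v$, each depending only on the composition $(B_1,\dots,B_k)$ of $I$ that $v$ induces. Maximizing a simplex selects its top block, so $(\ass_g)_v = \sum_S \Delta_{S \cap B_{j(S)}}$, where $j(S) = \min\{\, j : S \cap B_j \neq \emptyset \,\}$. The bijection with tubings I would build by induction on $|I|$ through the restriction--contraction formula of Proposition~\ref{p:faces_g_ass}: maximizing $\sum_{i \in S} e_i$ gives $(\ass_g)_v \equiv \ass_{g|_S} \times \ass_{g:T}$ with $T = I \setminus S$. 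A short path-surgery lemma shows $g|_S$ is connected whenever $g$ is (the $S$-vertices along any path of $g$ form a walk in $g|_S$), so this face is a facet exactly when $T$ is a tube; thus facets correspond to proper tubes. Every proper face lies in such a facet, and by induction the faces of the two factors correspond to tubings of the smaller graphs, which one splices --- together with the new tube $T$ --- into a tubing of $g$.

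The main obstacle is the well-definedness and bijectivity of this splicing, which is really the assertion that the normal fan of $\ass_g$ is the nested-set fan indexed by tubings. Concretely one must check that the assembled collection of tubes is independent of which facet the face is approached through, and that it satisfies exactly the three tubing axioms (pairwise nested-or-disjoint, non-decomposability, inclusion of components) --- no more and no fewer. Equivalently, I would show that two compositions of $I$ maximize to the same face of $\ass_g$ if and only if they determine the same tubing. Order-reversal then comes for free: enlarging a tubing by a tube imposes one further facet-defining condition and hence passes to a smaller face, and conversely.

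Finally, for the dimension I would invoke the standard formula $\dim \sum_A \Delta_A = |\operatorname{supp}| - (\#\text{components of the hypergraph } \{A\})$. Applied to $(\ass_g)_v = \sum_S \Delta_{S \cap B_{j(S)}}$, whose support is all of $I$, this reduces the claim $\dim F_t = |I| - |t|$ to the combinatorial identity that the number of connected components of the hypergraph $\{\, S \cap B_{j(S)} : g:S \text{ connected} \,\}$ equals the number of tubes $|t|$ in the corresponding tubing. I would verify this in lockstep with the bijection --- each descent to a facet drops the dimension by one and adds exactly one tube --- anchored by the two extreme cases $v = 0$, giving $\ass_g$ itself with $|t| = c(g)$ and $\dim = |I| - c(g)$, and $v$ generic, giving a vertex with a maximal tubing.
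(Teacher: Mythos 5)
The paper itself gives no proof of this statement: it is quoted as background from Carr--Devadoss and Postnikov, so your proposal can only be measured against the standard arguments in those references. Your skeleton does follow Postnikov's nested-fan route, and its individual reductions are sound: the splitting over connected components via Proposition~\ref{p:faces_g_ass}, the face formula $(\ass_g)_v = \sum_S \Delta_{S\cap B_{j(S)}}$ for a composition $(B_1,\dots,B_k)$, the identification of facets with proper tubes $T$ (using that $g|_S$ stays connected), and the dimension count $\dim \sum_A \Delta_A = |{\textstyle\bigcup A}| - \#\{\text{components}\}$ are all correct and anchor the induction properly.

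The genuine gap is the one you yourself flag and then defer: the well-definedness of the splicing, i.e.\ that the normal fan of $\ass_g$ \emph{is} the tubing fan. This is not a routine verification to be postponed --- it is the entire content of the theorem, and the natural splice rule one would reach for is false. Concretely, take $g = \path_4 = 1234$ and the facet given by the tube $T = \{2\}$, so $g|_S$ is the path $1\!-\!3\!-\!4$ (with the thread-edge $\{1,3\}$). The vertex of this pentagon factor maximizing $x_3$ has tubing $t_1 = \{\{1\},\{4\},\{1,3,4\}\}$. Splicing these tubes unchanged together with $T$ yields $\{\{1\},\{2\},\{4\},[4]\}$, which is \emph{not} a tubing of $g$: the disjoint tubes $\{1\}$ and $\{2\}$ have union $\{1,2\}$, a tube of $g$, violating the second axiom. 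The correct tubing of the corresponding vertex $(2,1,6,1)$ of $\ass_4$ is $\{\{2\},\{1,2\},\{4\},[4]\}$: the tube $\{1\}$ of $g|_S$ must be inflated to $\{1\}\cup T$ even though $\{1\}$ is already a perfectly good tube of $g$. So the splice map is not ``keep $W$ when $g\!:\!W$ is connected''; it must send $W \mapsto W \cup T$ whenever $W \cup T$ is itself a tube of $g$, and one must then prove that this preserves all three tubing axioms, that it is bijective onto tubings containing $T$, and that the resulting tubing is independent of which facet one descends through (a face generally lies in several). Until that lemma is stated and proved --- it is where Carr--Devadoss and Postnikov do the real work --- your proposal is an accurate roadmap rather than a proof, and the dimension claim, which you verify ``in lockstep with the bijection,'' inherits the same gap.
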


Explicitly,~$F_t$ is the face of~$\ass_g$ maximized by the linear functional~$-\sum_{i \in I} n_i x_i$, where~$n_i$ is the number of tubes in~$t$ containing the vertex~$i$. Moreover,~$F_t \equiv \ass_{g(t)}$.

\begin{example}
Consider the graph~$g = \begin{gathered}\includegraphics{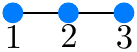}\end{gathered}$. The graph associahedron~$\ass_g$ is shown below.
\begin{center}
\includegraphics[width=.4\linewidth]{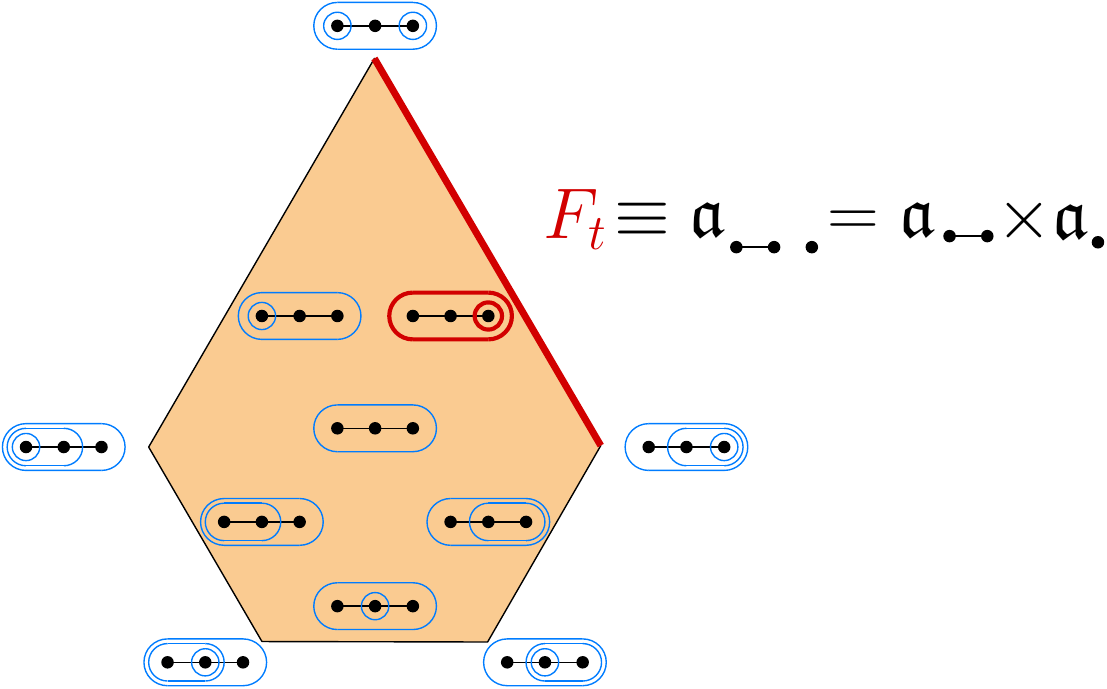}
\end{center}
The red face of~$\ass_g$ corresponds to the tubing~$t = \{3,123\}$. Therefore, it is the face where the functional~$-x_1 - x_2 - 2x_3$ is maximized or, equivalently, where~$x_1 + x_2$ is maximized. The value of~$x_1 + x_2$ along this face is~$5$, the total number of tubes of~$g$ intersecting the set~$\{1,2\}$.
\end{example}

\section{The Hopf monoid of sets of cycles and paths}\label{s:hc}

In this section we introduce the Hopf monoid of sets of cycles and paths~$\hc$. It naturally contains the \emph{Faà di Bruno} Hopf monoid (of sets paths)~$\rmF$ defined by Aguiar and Ardila in~\citep{aa17} as a submonoid and as a quotient.

The elements of~$\hc[I]$ are graphs with vertex set~$I$ whose connected components are either paths or cycles. We insist that a cycle has the same number of edges as vertices. Hence, cycles on~$1$ and~$2$ vertices are not simple graphs.

\begin{figure}[ht]
$$
\cycle_1 = \begin{gathered} \includegraphics[page=1]{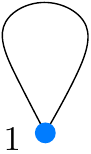} \end{gathered} \in \hc[1]
\qquad\qquad
\cycle_2 = \begin{gathered} \includegraphics[page=2]{Loops1_2.pdf} \end{gathered} \in \hc[2]
$$
\caption{These cycles are the only (isomorphism classes of) connected graphs in the species~$\hc$ that are not simple.}
\end{figure}

Following~\citep[Section 3.5]{aa17}, we write a path~$\path$ as a word, listing the vertices in the order they appear in~$\path$. Similarly, we write a cycle~$\cycle$ as a parenthesized word, listing the vertices in the cyclic order they appear in~$\cycle$. A set of paths and cycles~$g \in \hc[I]$ is denoted by~$w_1|\dots|w_r|(w'_1)|\dots|(w'_k)$, where each (parenthesized) word corresponds to a connected component of~$g$.

\begin{figure}[ht]
$$\includegraphics[page=1]{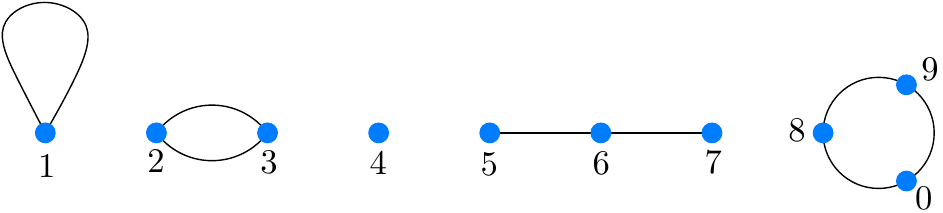}$$
\caption{This set of paths and cycles corresponds to~$4|567|(1)|(23)|(890) = 765|4|(32)|(1)|(908) = \dots$.}
\label{f:ex_words}
\end{figure}

We proceed to give~$\hc$ a Hopf monoid structure. The product is given by the disjoint union of graphs. For~$I = S \sqcup T$,~$g_1 \in \hc[S]$ and~$g_2 \in \hc[T]$, define
$$
\mu_{S,T}(g_1,g_2) = g_1 \cdot g_2 = g_1 \sqcup g_2.
$$
The definition of coproduct is a bit more subtle. For~$g \in \hc[I]$ and~$I = S \sqcup T$, define
$$
\Delta_{S,T}(g) = (g|_S,g/_S) = (g|_S,g \!:\! T)
$$
where

$g|_S$ is defined just like before Proposition~\ref{p:faces_g_ass} with a small caveat: a~$T$-thread might have the same start and end point and different~$T$-threads yield different edges. This guarantees that if~$S$ intersects a cycle of~$g$ in~$1$ or~$2$ vertices, the corresponding in~$g|_S$ is again a cycle.

\begin{example}
If~$g = 4|567|(1)|(23)|(890)$ is the example in Figure~\ref{f:ex_words} and~$S = \{1,4,5,7,9\}$, then
$$
g|_S = \begin{gathered} \includegraphics[page=4]{ex_words.pdf} \end{gathered}
\qqand
g/_S = \begin{gathered} \includegraphics[page=5]{ex_words.pdf} \end{gathered}
$$
Equivalently,~$g|_S = 4|57|(1)|(9)$ and~$g/_S = 6|08|(23)$. Dashed edges in~$g|_S$ correspond to~$T$-threads where~$T = I \setminus S = \{2,3,6,8,0\}$.
\end{example}

Let~$\rmF$ be the subspecies of~$\hc$ consisting of graphs whose connected components are paths.
It is naturally a Hopf submonoid of~$\hc$, and this structure coincides with the definition of Aguiar and Ardila in~\citep{aa17}.
Moreover,~$\rmF$ is also isomorphic to a Hopf monoid quotient of~$\hc$.
This follows from the following observation: if a graph~$g \in \hc[I]$ contains a cycle, then for any decomposition~$I = S \sqcup T$ one of~$g|_S$ and~$g/_S$ also contains a cycle.

The first fact implies that the antipode formula of~$\rmF$ is still valid for paths in~$\hc$.
The second fact implies that~$\bbX(\rmF)$ is a subgroup of~$\bbX(\hc)$.
Hence, the inversion formulas for~$\bbX(\rmF)$ deduced in~\citep{aa17} are still valid for~$\bbX(\hc)$.

\section{The antipode problem for~$\hc$}\label{s:apode_C}

In this section, we solve the antipode problem for the Hopf monoid~$\hc$. Aguiar and Ardila found a cancellation-free formula for the antipode of the submonoid~$\rmF$ in terms of tubings of paths. By grouping terms, they also describe a cancellation-free and grouping-free formula in terms of noncrossing partitions. Since the antipode axiom is expressed entirely in terms of products and coproducts, these formulas still hold for sets of paths in~$\hc$. We review their results first.

A \textbf{noncrossing partition} of a path~$\path \in \rmF[I] \subseteq \hc[I]$ is a partition~$\pi = \{\pi_1,\dots,\pi_k\}$ of~$I$ such that if~\red{$a,c \in \pi_i$} and~\blue{$b,d \in \pi_j$} distinct blocks~$\pi_i$ and~$\pi_j$, then~$\red{a}\blue{b}\red{c}\blue{d}$ do not appear in that order in~$\path$. If~$\pi$ is a noncrossing partition of~$\path$, let~$\path(\pi)$ be the following set of paths
$$
\path(\pi) = \path|_{\pi_1} \sqcup \path|_{\pi_2} \sqcup \dots \sqcup \path|_{\pi_k}.
$$
That is,~$\path(\pi)$ is the disjoint union of~$k$ paths. There is one path on each block~$\pi_i \in \pi$, and the vertices of~$\path|_{\pi_i}$ appear in the same relative order as in~$\path$.

\begin{example}\label{ex:NC1}
Consider the following path~$\path$ on~$12$ vertices and noncrossing partition~$\pi$ of~$\path$. Two vertices of~$\path$ are on the same block of~$\pi$ if they are connected by gray edges.
$$
\includegraphics[scale=1,page=1]{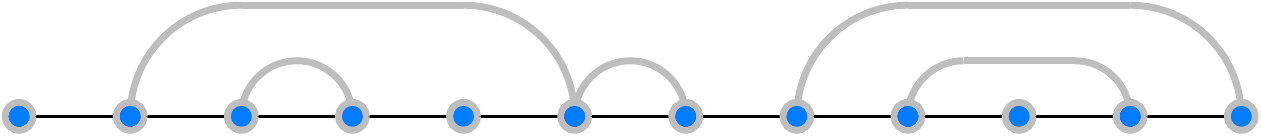}
$$
Then,~$\path(\pi)$ consists of: one path on~$3$ vertices, three paths on~$2$ vertices, and three paths on~$1$ vertex.

\end{example}

Let~$t$ be a tubing of a path~$\path$. The tubes of~$t$ are either nested or disjoint; hence, the associated partition~$\pi(t)$ defined in Section~\ref{ss:g_ass} is noncrossing. The corresponding graph~$\path(t)$ is precisely~$\path(\pi(t))$.

\begin{proposition}[{\citep[Proposition 25.2]{aa17}}]\label{p:apode_paths_tub}
The antipode of the Hopf monoid of paths~$\rmF$ is given by the following cancellation-free expression.
If~$\path$ is a path on~$I$,
$$
\apode_I(\path) = \sum_t (-1)^{|t|} \path(t).
$$
The sum is over all tubings of~$\path$.
\end{proposition}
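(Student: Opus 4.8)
The plan is to realize the path~$\path$ inside~$\opp{\GP}$ as its graph associahedron and transport the antipode formula of Theorem~\ref{t:apode_GP} back to~$\hc$. First I would check that~$g \mapsto \ass_g$ defines a morphism of Hopf monoids~$\Phi \colon \hc \to \opp{\GP}$. The product axiom is immediate from the first identity of Proposition~\ref{p:faces_g_ass}, namely~$\ass_{g_1 \sqcup g_2} = \ass_{g_1} \times \ass_{g_2}$, since the product of~$\hc$ is disjoint union while that of~$\GP$ is the Cartesian product. For the coproduct, the remaining two identities~$\ass_g|_S \equiv \ass_{g|_S}$ and~$\ass_g/_S = \ass_{g : T}$ say exactly that~$\Phi$ intertwines the coproduct~$\Delta^{\hc}_{S,T}(g) = (g|_S, g : T)$ with that of~$\opp{\GP}$. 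It is here that one must land in~$\opp{\GP}$ rather than~$\GP$, since the restriction is controlled only up to normal equivalence.

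Because morphisms of Hopf monoids commute with the antipode, we have~$\Phi\big(\apode_I(\path)\big) = \apode_I(\ass_\path)$, so it suffices to compute the right-hand side and recognize it as the image of~$\sum_t (-1)^{|t|}\path(t)$. For this I would apply Theorem~\ref{t:apode_GP} to~$\ass_\path$ and re-index the resulting sum over faces~$\q \leq \ass_\path$ by the order-reversing bijection between faces and tubings~$t$ of~$\path$. Along this bijection the face~$F_t$ satisfies~$\dim F_t = |I| - |t|$ and~$F_t \equiv \ass_{\path(t)}$ (recall~$\path(t) = \path(\pi(t))$ for a path), so its sign contributes~$(-1)^{|I|}(-1)^{|I|-|t|} = (-1)^{|t|}$. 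Substituting gives
$$
\apode_I(\ass_\path) = \sum_t (-1)^{|t|}\,\ass_{\path(t)} = \Phi\Big(\sum_t (-1)^{|t|}\,\path(t)\Big),
$$
the last equality by linearity of~$\Phi$, with the sum over all tubings of~$\path$.

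To conclude~$\apode_I(\path) = \sum_t (-1)^{|t|}\path(t)$ from the equality of their images, I would invoke the injectivity of~$\Phi$ as a linear map~$\Kb\rmF[I] \to \Kb\opp{\GP}[I]$: distinct sets of paths give distinct normal-equivalence classes of associahedra, since the two-element tubes of~$g$ recover its edge set and a disjoint union of paths is determined by its edges, and distinct classes are linearly independent in~$\Kb\opp{\GP}[I]$.

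The hard part will be this injectivity step. Everything else is a bookkeeping translation through the face--tubing dictionary, but verifying that~$g \mapsto \ass_g$ separates sets of paths — equivalently, that the building set of the associahedron of a disjoint union of paths is recoverable from its normal equivalence class — requires a genuine, if short, argument. An alternative route that avoids~$\Phi$ altogether is a direct induction on~$|I|$ via the Milnor--Moore recursion~\eqref{eq:mil_moo}: one expands~$\apode_I(\path) = -\sum_{S \neq \emptyset} \path|_S \cdot \apode_T(\path/_S)$, uses that~$\path|_S$ is again a single path and~$\path/_S = \path : T$ a set of paths, and applies the inductive hypothesis together with~\eqref{eq:apode_prod}; this is essentially the original argument, and the delicate point there becomes the bijective matching of the terms with tubings of~$\path$. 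I would present the associahedron argument as the main proof and note the inductive one as a self-contained check.
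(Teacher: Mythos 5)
Your argument is correct, and it is worth noting at the outset that the paper does not actually prove this proposition: it is imported verbatim from Aguiar--Ardila \citep[Proposition 25.2]{aa17}, so there is no in-paper proof to match it against. That said, your main polytopal route is exactly the mechanism the paper itself assembles in Section~\ref{s:ass_cyc_ps}: the morphism $g \mapsto \ass_g$ from $\hc$ to $\opp{\GP}$ appears there (with the warning that it is not injective on all of $\hc$, only because $\ass_{\path_n} = \ass_{\cycle_n}$ for $n = 1,2$, which does not affect the restriction to $\rmF$), and the injectivity you single out as the hard part is precisely the isomorphism $\rmF \cong \opp{\rmA}$ that the paper quotes from \citep[Proposition~25.7]{aa17}. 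You could simply cite that rather than re-derive it, though your sketch is sound: the lineality space of the normal fan recovers the partition into connected components, the facet rays recover the proper tubes, and the two-element tubes are the edges, which determine a set of paths. Your sign bookkeeping, $(-1)^{|I|}(-1)^{|I| - |t|} = (-1)^{|t|}$ via $\dim F_t = |I| - |t|$ and $F_t \equiv \ass_{\path(t)}$, is right.

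Two points to tighten. First, the proposition asserts the expression is \emph{cancellation-free}, and you never address this; it does follow from your setup, but say why: if $\path(t) = \path(t')$ then $F_t \equiv F_{t'}$, and normally equivalent polytopes have equal dimension, so $|t| = |t'|$ and equal structures always appear with the same sign. (This uses that Theorem~\ref{t:apode_GP} remains cancellation-free, though not grouping-free, after passing to $\opp{\GP}$, as the paper notes.) Second, the fact that a morphism of connected Hopf monoids commutes with the antipode deserves at least a one-line justification, e.g.\ induction on $|I|$ through the recursion \eqref{eq:mil_moo}. Your fallback induction is also viable and self-contained: since $S \neq \emptyset$, the components of $\path/_S$ are strictly smaller paths, and the matching of a choice of $S$ plus tubings of the components with a tubing of $\path$ works because the maximal proper tubes $T_1, \dots, T_r$ of any tubing must leave $S = I \setminus (T_1 \cup \dots \cup T_r)$ nonempty (otherwise their disjoint union would be a tube). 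This is precisely the pattern of the paper's own proof of the cycle analogue, Theorem~\ref{t:apode_cycles_tub}, which takes the present proposition as its input, so there is no circularity in either direction.
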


The graph~$\path(t)$ is entirely determined by the noncrossing partition~$\pi(t)$. Hence, a grouping-free formula for the antipode of~$\rmF$ can be obtained by grouping tubings with the same associated noncrossing partition. To do so, we need to introduce the concept of adjacent closure of a noncrossing partition.

In order to simplify notation, let us consider the case of the standard path~$\path_n = 123\dots n \in \hc[n]$. Let~$\pi$ be a noncrossing partition of~$\path_n$. The \textbf{adjacent closure} of~$\pi$ is the noncrossing partition~$\opp{\pi}$ obtained by successively merging any two blocks~$\pi_i,\pi_j \in \pi$ such that~$\max \pi_i +1 = \min \pi_j$. For instance, if~$\pi = \{ 14, 23, 58 , 6 , 7 ,9 \}$, then~$\opp{\pi} = \{ 14589 , 23 , 67 \}$. This definition is easily generalized to any path, as the illustrated in the next example.

\begin{example}\label{ex:NC2}
Consider the path~$\path$ and noncrossing partition~$\pi$ in Example \ref{ex:NC1}.
The adjacent closure~$\opp{\pi}$ of~$\pi$ is shown in the following picture.
$$
\includegraphics[scale=1,page=2]{NC-ex.pdf}
$$
\end{example}

For a noncrossing partition~$\pi$, define
$$
C_{(\opp{\pi}:\pi)} = \prod_{\pi'_i \in \opp{\pi}} C_{n_i},
$$
where~$C_m$ is the~$m$-th Catalan number, and~$n_i$ is the number of blocks of~$\pi$ contained in~$\pi'_i$.

For the partition in Examples \ref{ex:NC1} and \ref{ex:NC2}, we have
$$
C_{(\opp{\pi}:\pi)} = C_3 C_2 C_1 C_1 = 5 \cdot 2 \cdot 1 \cdot 1 = 10.
$$

\begin{theorem}[{\citep[Theorem 25.4]{aa17}}]\label{t:apode_paths_cat}
The antipode of the Hopf monoid of paths~$\rmF$ is given by the following cancellation-free and grouping-free expression.
If~$\path$ is a path on~$I$,
$$
\apode_I(\path) = \sum_{\pi \in NC(\path)} (-1)^{|\pi|} C_{(\opp{\pi}:\pi)} \path(\pi),
$$
where~$NC(\path)$ is the set of all noncrossing partitions of~$\path$.
\end{theorem}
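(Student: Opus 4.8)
The plan is to derive Theorem~\ref{t:apode_paths_cat} from the tubing formula of Proposition~\ref{p:apode_paths_tub} by grouping together all tubings~$t$ that induce the same noncrossing partition~$\pi(t) = \pi$. Since the graph~$\path(t)$ depends only on~$\pi(t)$, and equals~$\path(\pi)$, every tubing in a fixed group contributes the same term~$\path(\pi)$ up to the sign~$(-1)^{|t|}$. Thus the proof reduces to a purely enumerative statement: for a fixed noncrossing partition~$\pi$ of~$\path$, the signed count
$$
\sum_{t \,:\, \pi(t) = \pi} (-1)^{|t|}
$$
must equal~$(-1)^{|\pi|} C_{(\opp{\pi}:\pi)}$. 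First I would make precise the combinatorics of which tubings refine a given~$\pi$, and then evaluate this signed sum.

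The key structural observation is that a tubing~$t$ with~$\pi(t) = \pi$ is built in two layers. The inner layer records, for each block~$\pi_i$, a minimal tube carving out that block; but several blocks can be bundled into a single tube precisely when they are \emph{adjacent} along the path, which is exactly what the adjacent closure~$\opp{\pi}$ tracks. So the tubes of~$t$ that strictly separate the blocks of~$\pi$ organize themselves according to the blocks~$\pi'_i \in \opp{\pi}$: within each~$\pi'_i$, which contains~$n_i$ blocks of~$\pi$, the allowable nested families of tubes that realize this grouping are counted by a Catalan structure. The next step is therefore to show that the local signed contribution over all tubings refining~$\pi$ within a single closure-block~$\pi'_i$ factors, and that the factorization is multiplicative across the blocks of~$\opp{\pi}$, yielding~$\prod_i (\text{local signed count over } \pi'_i)$.

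The heart of the matter is the local signed count. I would set up a bijection (or a sign-reversing/generating-function argument) identifying the tubings within one closure-block that group~$n_i$ adjacent path-blocks into a single connected tube with a classical Catalan object, and then show that the alternating sum~$\sum_t (-1)^{|t|}$ over this local family collapses, after cancellation, to~$(-1)^{(\text{number of blocks})} C_{n_i}$. The cleanest route is likely to recognize that the tubes separating a path into~$n_i$ ordered pieces, subject to the nesting condition of a tubing, correspond to binary-tree or lattice-path data enumerated by~$C_{n_i}$, each such maximal configuration carrying a controlled number of tubes; one then verifies that the signed sum over all tubings that \emph{coarsen} to the same grouping telescopes to a single signed Catalan number. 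Assembling the local counts via the product formula and reinstating the global factor~$\path(\pi)$ then gives the claimed expression.

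The hard part will be the local signed-enumeration step: showing that the alternating sum over all tubings inducing a fixed~$\pi$, organized by its adjacent closure, produces exactly the product of Catalan numbers with the correct global sign~$(-1)^{|\pi|}$. The sign bookkeeping is delicate, because the exponent~$|t|$ varies across the group while the target exponent~$|\pi|$ is fixed, so the Catalan numbers must absorb precisely the surplus contributed by the extra nested tubes. I expect the adjacent-closure definition to be engineered exactly so that this surplus factors blockwise and evaluates to~$\prod_i C_{n_i}$; making that factorization and sign accounting rigorous is the crux.
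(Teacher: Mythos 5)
Your overall route---grouping the terms of Proposition~\ref{p:apode_paths_tub} according to the induced noncrossing partition $\pi(t)$, since $\path(t)$ depends only on $\pi(t)$---is exactly the route behind this theorem (the paper quotes it from \citep{aa17} and sketches precisely this grouping, introducing the adjacent closure for that purpose). But your execution rests on a false premise, and the machinery you build on it would fail. You assert that ``the exponent $|t|$ varies across the group'' and accordingly plan a sign-reversing/telescoping argument in which ``the Catalan numbers must absorb precisely the surplus contributed by the extra nested tubes.'' In fact $|t| = |\pi(t)|$ for \emph{every} tubing $t$ of a path, so the sign $(-1)^{|t|}$ is constant, equal to $(-1)^{|\pi|}$, on each group, and there is nothing to cancel or telescope---consistently with the formula being asserted to be cancellation-free (if the sign varied within a group, Proposition~\ref{p:apode_paths_tub} itself could not be cancellation-free). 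One way to see the equality: by the face--tubing correspondence of Section~\ref{ss:g_ass}, the face $F_t$ has dimension $|I| - |t|$ and satisfies $F_t \equiv \ass_{\path(t)}$, whose dimension is $|I| - c(\path(t)) = |I| - |\pi(t)|$; normally equivalent polytopes have equal dimension. Configurations that would make $|t|$ exceed $|\pi(t)|$, such as $\{12, 34, 1234\}$ on the path $1234$, are exactly those excluded by the tubing axiom forbidding a disjoint union of tubes from being a tube; overlooking that axiom is likely what led you to expect a fluctuating sign.

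Once the sign is seen to be constant, the theorem reduces to the plain unsigned count $\#\{t \,:\, \pi(t) = \pi\} = C_{(\opp{\pi}:\pi)}$, and this---the actual crux---is the step your proposal leaves as a gesture (``set up a bijection \dots with a classical Catalan object''), aimed moreover at the wrong target (a collapsed alternating sum rather than an enumeration). The correct local statement is: a tubing $t$ with $\pi(t) = \pi$ amounts to choosing, independently for each block $\pi'_i$ of the adjacent closure $\opp{\pi}$, a \emph{maximal} tubing of the path obtained from $\path \!:\! \pi'_i$ by contracting each of the $n_i$ blocks of $\pi$ inside $\pi'_i$ to a single vertex (adjacency of these blocks along $\path$ is exactly what $\opp{\pi}$ records); maximal tubings of a path on $n_i$ vertices are the vertices of the associahedron $\ass_{n_i}$, of which there are $C_{n_i}$, whence the product $\prod_i C_{n_i}$. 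Your blockwise-factorization intuition and the role you assign to $\opp{\pi}$ are sound, but as written the proposal does not prove the theorem: the central mechanism you propose (cancellation) is the wrong mechanism, not merely an incomplete one, and the contraction bijection carrying the real content is missing.
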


We now proceed to deduce results parallel to Proposition~\ref{p:apode_paths_tub} and Theorem~\ref{t:apode_paths_cat} for cycles.
A \textbf{pointed noncrossing partition} of a cycle~$\cycle = (i_1 i_2 \dots i_n) \in \hc[I]$ is a pair~$\pi = (\pi_0,\pi_+)$, where~$\{\pi_0\} \sqcup \pi_+ = \{ \pi_0, \pi_1,\dots,\pi_k \}$ is a noncrossing partition of the path~$i_1 i_2 \dots i_n$. It is a classical result that this is independent of the parenthesized word representing~$\cycle$. We refer to~$\pi_0$ as the \textbf{zero block} of~$\pi$, and to the blocks in~$\pi_+$ as the \textbf{nonzero blocks} of~$\pi$. Given a pointed noncrossing partition~$\pi = (\pi_0,\{\pi_1,\dots,\pi_k\})$ of~$\cycle$, we let~$\cycle(\pi)$ denote the following set of cycles and paths
$$
\cycle(\pi) = \cycle|_{\pi_0} \sqcup (\cycle/_{\pi_0})|_{\pi_1} \sqcup \dots \sqcup (\cycle/_{\pi_0})|_{\pi_k}.
$$

\begin{example}\label{ex:PNC1}
Consider the following cycle~$\cycle$ on~$12$ vertices and the pointed noncrossing partition~$(\pi_0,\pi_+)$.
$$
\includegraphics[scale=.8,page=1]{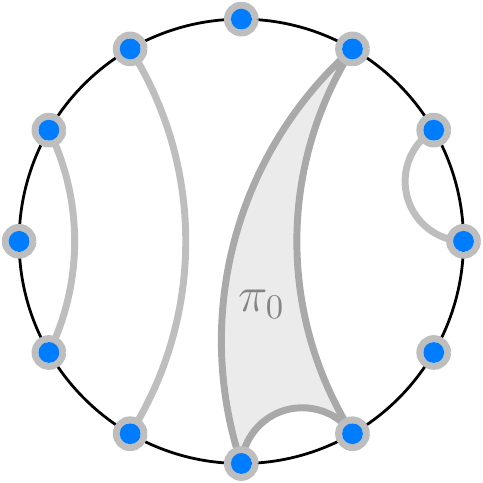}
$$
The zero block is shown in a darker shade of gray.
The graph~$\cycle(\pi)$ consists of: one cycle on~$3$ vertices ($\cycle|_{\pi_0}$), three paths on~$2$ vertices, and three paths on~$1$ vertex.
\end{example}

Let~$t$ be a tubing of a cycle~$\cycle$. It determines a pointed noncrossing partition~$\pi(t) = (\pi(t)_0 , \pi(t)_+)$ as follows. The underlying partition~$\{\pi(t)_0\} \sqcup \pi(t)_+$ is the partition associated to the tubing~$t$ as in Section~\ref{ss:g_ass}, and the zero-block~$\pi(t)_0$ corresponds to the elements contained only in the maximal tube~$I \in t$. Given a tubing~$t$ of~$\cycle$, define~$\cycle(t) = \cycle(\pi(t))$.

\begin{remark}
If~$\pi(t)_0$ contains at least 3 elements, the definition of~$\cycle(t)$ is exactly the same as the one in Section~\ref{ss:g_ass}.
However, when~$\pi(t)_0$ consists of 1 or 2 elements, we are emphasizing that the component in~$\pi(t)_0$ of~$\cycle(t)$ is a cycle (non-simple graph).
In terms of the corresponding graph associahedra, this does not make a difference.
\end{remark}

\begin{theorem}\label{t:apode_cycles_tub}
The following is a cancellation-free for the antipode of a cycle in~$\hc$.

If~$\cycle$ is a cycle on vertex set~$I$,
$$
\apode_I(\cycle) = \sum_t (-1)^{|t|} \cycle(t).
$$
The sum is over all tubings of~$\cycle$.
\end{theorem}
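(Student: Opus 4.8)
The plan is to compute $\apode_I(\cycle)$ directly from the Milnor and Moore recursion~\eqref{eq:mil_moo} and recognize the result as a sum over tubings. The key structural observation is that the coproduct of a cycle is essentially one-sided: for any nonempty $S \subseteq I$ with complement $T$, the quotient $\cycle/_S = \cycle\!:\!T$ is a disjoint union of paths (the arcs into which the vertices of $S$ cut the cycle), while $\cycle|_S$ is again a cycle on $S$. Thus every term on the right of~\eqref{eq:mil_moo} is the product of a single cycle $\cycle|_S$ with the antipode of a union of paths, and no cycle ever appears on the $T$-side.

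First I would reduce the antipode of $\cycle/_S$ to the known path case. Since the antipode reverses products~\eqref{eq:apode_prod} and the product of~$\hc$ is the commutative disjoint union, $\apode_T(\cycle\!:\!T)$ is the product of the antipodes of the path components, and a tubing of a disjoint union of paths is exactly a choice of tubing on each component. Hence Proposition~\ref{p:apode_paths_tub} gives
$$
\apode_T(\cycle\!:\!T) = \sum_{t_T} (-1)^{|t_T|}\, (\cycle\!:\!T)(t_T),
$$
the sum over all tubings $t_T$ of $\cycle\!:\!T$. Substituting this into~\eqref{eq:mil_moo}, and treating $S = I$ as the degenerate case $T = \emptyset$, expresses the antipode as
$$
\apode_I(\cycle) = -\sum_{\emptyset \neq S \subseteq I}\ \sum_{t_T} (-1)^{|t_T|}\, \cycle|_S \cdot (\cycle\!:\!T)(t_T).
$$

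The heart of the argument is a sign-preserving bijection between the index set of this double sum and the set of tubings of~$\cycle$. Given a pair $(S,t_T)$ I would set $t = t_T \cup \{I\}$; conversely, given a tubing $t$ of $\cycle$ I would remove the maximal tube $I$ (which lies in every tubing since $\cycle$ is connected) and set $S = \pi(t)_0$, the zero block. I expect the main difficulty to be matching these two descriptions, specifically the tubing axiom requiring that every connected component of the ambient graph be a tube. I would argue that the maximal proper tubes of $t$ are pairwise disjoint arcs whose union is $T = I \setminus \pi(t)_0$, and that no two of them can be adjacent arcs, for otherwise their union would be a tube, contradicting the second tubing axiom. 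Consequently each connected component (maximal arc) of $\cycle\!:\!T$ is itself a maximal proper tube of $t$, so all components are present and $t \setminus \{I\}$ is a genuine tubing of $\cycle\!:\!T$; the converse passage $t_T \mapsto t_T \cup \{I\}$ is checked similarly, using that a tubing of $\cycle\!:\!T$ already covers $T$ so that $S = I \setminus \bigcup t_T$ is recovered as the zero block.

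Finally I would verify that this bijection carries each summand to the correct tubing term. The signs agree because $|t| = |t_T| + 1$, whence $-(-1)^{|t_T|} = (-1)^{|t|}$. The graphs agree because the unique cycle component of $\cycle|_S \cdot (\cycle\!:\!T)(t_T)$ is $\cycle|_{\pi(t)_0}$, exactly the cycle on the zero block prescribed by $\cycle(t)$, while the remaining path components coincide since the threaded edges of both $(\cycle\!:\!T)(t_T)$ and $\cycle(t)$ are read off from the same proper tubes. Summing over the bijection then yields $\apode_I(\cycle) = \sum_t (-1)^{|t|}\cycle(t)$. It is worth emphasizing that this computation automatically produces a genuine cycle (possibly a loop or a digon) on the zero block, so the $1$- and $2$-vertex subtlety flagged before the statement is handled correctly; this is a point invisible to the graph associahedron, and hence one that could not be recovered from the $\opp{\GP}$ antipode formula alone.
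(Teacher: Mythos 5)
Your proposal follows essentially the same route as the paper's proof: expand via the Milnor--Moore formula~\eqref{eq:mil_moo}, use the product-reversal property~\eqref{eq:apode_prod} together with Proposition~\ref{p:apode_paths_tub} to resolve the antipode of the path components of~$\cycle/_S$, and then match pairs~$(S,t_T)$ with tubings~$t = t_T \cup \{I\}$ of~$\cycle$, with signs agreeing via~$|t| = |t_T| + 1$. The only differences are cosmetic---the paper keeps the tubings~$t^i$ of the individual components~$\cycle\!:\!T_i$ separate where you bundle them into one tubing of~$\cycle\!:\!T$, and your explicit check of the tubing axioms (non-adjacency of the maximal proper tubes) fills in a step the paper states without proof.
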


\begin{remark}
The reader might be familiar with the antipode formula of the ripping and sewing Hopf monoid~$\rmW$ in~\citep[Theorem 23.6]{aa17} which is also expressed in terms of tubings of a graph. However,~$\hc$ is \textbf{not} a submonoid of (the coopposite of)~$\rmW$, and Theorem~\ref{t:apode_cycles_tub} does not immediately follow from the antipode formula for~$\rmW$.
\end{remark}

\begin{proof}
Using Milnor and Moore formula~\eqref{eq:mil_moo} and~\eqref{eq:apode_prod}, we obtain
\begin{equation}\label{eq:apode_fst_step}
\apode_I(\cycle)
= - \sum_{\emptyset \subsetneq S \subseteq I} \cycle|_S \cdot \apode_T(\cycle/_S)
= - \sum_{\emptyset \subsetneq S \subseteq I} \cycle|_S \cdot \apode_{T_1}(\cycle \!:\! T_1) \cdot \ldots \cdot \apode_{T_r}(\cycle \!:\! T_r),
\end{equation}
where~$T = T_1 \sqcup \dots \sqcup T_r$ is the partition of~$T:= I \setminus S$ into connected components of~$\cycle/_S = \cycle \!: \! T$.\break
Let~$\path_i$ denote the path~$\cycle \!:\! T_i$.
Then, an application of Proposition~\ref{p:apode_paths_tub} yields
\begin{equation}\label{eq:apode_cy_tu_aux1}
\apode_I(\cycle) = - \sum_{\emptyset \subsetneq S \subseteq I} \cycle|_S \cdot \prod_{i} \Big( \sum_{t^i} (-1)^{|t^i|} \path_i(t^i) \Big),
\end{equation}
where the internal sum is over all tubings~$t^i$ of~$\path_i$.

A choice of a nonempty subset~$S \subseteq I$ and of a tubing~$t^i$ for each~$\path_i$ is equivalent to a choice of a tubing~$t = \{I\} \sqcup t^1 \sqcup \dots \sqcup t^r$ of~$\cycle$. Indeed, given a tubing~$t$ of~$\cycle$, let~$T_1,\dots,T_r$ be the maximal proper tubes of~$t$. Then,~$S = I \setminus \big( T_1 \cup \dots \cup T_r)$ and~$t^i$ is the collection of tubes in~$t$ that are contained in~$T_i$. Moreover,
$$
S = \pi(t)_0,
\qquad
\pi(t)_+ = \pi(t^1) \sqcup \dots \sqcup \pi(t^r),
\qqand
\path_i|_{\pi^i_j} = (\cycle/_S)|_{\pi^i_j} \quad \text{for all } \pi^i_j \in \pi(t^i).
$$
Therefore,~$\cycle|_S \cdot \path_1(t^1) \dots \path_r(t^r) = \cycle(t)$ where~$t = \{I\} \sqcup t^1 \sqcup \dots \sqcup t^r$. Since~$|t| = 1 + |t_1| + \dots + |t^r|$,~\eqref{eq:apode_cy_tu_aux1} is equivalent to
$$
\apode_I(\cycle) = - \sum_t (-1)^{|t^1|+\dots+|t^r|} \cycle|_S \cdot \path_1(t^1) \dots \path_r(t^r) = \sum_t (-1)^{|t|} \cycle(t),
$$
as we wanted to show.

\end{proof}

In order to simplify notation, let us consider the standard cycle~$\cycle_n = (12\dots n)$. Let~$\pi = (\pi_0,\pi_+)$ be a pointed noncrossing partition of~$\cycle_n$ such that~$1 \in \pi_0$.
The \textbf{adjacent closure} of~$\pi = (\pi_0,\pi_+)$ is the pointed noncrossing partition~$\opp{\pi} = (\pi_0,\opp{\pi}_+)$,
where~$\opp{\pi}_+$ is obtained by successively merging non-zero blocks~$\pi_i,\pi_j$ if~$\max \pi_i + 1 = \min \pi_j$.

Again, this generalizes to any cycle~$\cycle$ and pointed noncrossing partition, as the following example illustrates.

\begin{example}
Consider the cycle~$\cycle$ and pointed noncrossing partition~$\pi$ in Example \ref{ex:PNC1}.
The adjacent closure~$\opp{\pi}$ of~$\pi$ is shown in the following picture.
$$
\includegraphics[scale=.8,page=2]{PNC-ex.pdf}
$$
\end{example}

\begin{theorem}\label{t:apode_cycles_Cat}
The following is a cancellation-free and grouping-free formula for the antipode of a cycle in~$\hc$.
If~$\cycle$ is a cycle on vertex set~$I$,
\begin{equation}\label{eq:apode_cycles_Cat}
\apode_I(\cycle) = \sum_{\pi \in PNC(\cycle)} (-1)^{|\pi|} C_{(\opp{\pi}_+:\pi_+)} \cycle(\pi),
\end{equation}
where~$PNC(\cycle)$ is the set of all pointed noncrossing partitions of~$\path$ and~$|\pi| = 1 + |\pi_+|$.
\end{theorem}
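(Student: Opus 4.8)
The plan is to refine Theorem~\ref{t:apode_cycles_tub} by grouping its tubings according to their associated pointed noncrossing partition, in exact parallel with the way Theorem~\ref{t:apode_paths_cat} refines Proposition~\ref{p:apode_paths_tub} for paths. Since~$\cycle(t) = \cycle(\pi(t))$ by definition, and since distinct pointed noncrossing partitions~$\pi$ yield distinct labeled graphs~$\cycle(\pi)$ (the cycle component sits on~$\pi_0$ and the remaining paths record~$\pi_+$), the summands~$\cycle(\pi)$ are linearly independent in~$\Kb\hc[I]$. Thus, starting from~$\apode_I(\cycle) = \sum_t (-1)^{|t|}\cycle(t)$ and comparing coefficients, it suffices to establish for each fixed~$\pi = (\pi_0,\pi_+) \in PNC(\cycle)$ the signed count
$$
\sum_{t \,:\, \pi(t) = \pi} (-1)^{|t|} = (-1)^{|\pi|}\, C_{(\opp{\pi}_+ : \pi_+)}.
$$

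First I would pin down which tubings~$t$ satisfy~$\pi(t) = \pi$. Since~$\pi(t)_0 = \pi_0$ records exactly the vertices lying in no proper tube, the maximal proper tubes of any such~$t$ are forced to be the arcs~$A_1,\dots,A_r$, that is, the connected components of the induced subgraph~$\cycle \!:\! (I \setminus \pi_0)$. Indeed, each~$A_j$ is a path (hence a tube) whose neighbors in~$\cycle$ lie in~$\pi_0$, so it cannot be enlarged to a proper tube; and two distinct maximal proper tubes inside a single arc would be disjoint with connected (hence tube) union, violating the tubing axioms. Note also that~$\pi_0 \neq \emptyset$ always, since disjoint arcs covering all of~$I$ would have the cycle~$I$ itself as their union. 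This is precisely the reindexing used in the proof of Theorem~\ref{t:apode_cycles_tub}, now carried out with the maximal proper tubes prescribed by~$\pi_0$.

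With the arcs fixed, I would set up a bijection between tubings~$t$ of~$\cycle$ with~$\pi(t) = \pi$ and tuples~$(t^1,\dots,t^r)$, where~$t^j$ is a tubing of the path~$A_j$ whose associated noncrossing partition equals~$\pi_+^{(j)} := \{\, B \in \pi_+ : B \subseteq A_j \,\}$; each nonzero block~$B$ lies in a single arc because its minimal tube is a proper tube contained in some~$A_j$. Writing~$t = \{I\} \sqcup t^1 \sqcup \dots \sqcup t^r$ gives~$|t| = 1 + \sum_j |t^j|$, so the signed count factors, and each arc contributes the path-case evaluation obtained by comparing Proposition~\ref{p:apode_paths_tub} and Theorem~\ref{t:apode_paths_cat}:
$$
\sum_{t \,:\, \pi(t)=\pi}(-1)^{|t|}
= -\prod_{j=1}^{r}\Big(\sum_{t^j \,:\, \pi(t^j)=\pi_+^{(j)}} (-1)^{|t^j|}\Big)
= -\prod_{j=1}^{r} (-1)^{|\pi_+^{(j)}|}\, C_{(\opp{\pi_+^{(j)}} : \pi_+^{(j)})}.
$$

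Finally I would reassemble the two factors. The exponents add, $\sum_j |\pi_+^{(j)}| = |\pi_+|$, producing the global sign~$-(-1)^{|\pi_+|} = (-1)^{1+|\pi_+|} = (-1)^{|\pi|}$. For the Catalan factor I must verify that the adjacent closure of the cycle decomposes arc by arc, namely~$\opp{\pi}_+ = \bigsqcup_j \opp{\pi_+^{(j)}}$; this is where I expect the only genuine subtlety. Writing~$\cycle = \cycle_n$ with~$1 \in \pi_0$, two nonzero blocks merge only when~$\max\pi_i + 1 = \min\pi_j$, and two such consecutive integers, both outside~$\pi_0$, necessarily belong to the same maximal run of~$I \setminus \pi_0$, hence to the same arc; so no merging crosses an arc boundary. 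Consequently~$C_{(\opp{\pi}_+:\pi_+)} = \prod_j C_{(\opp{\pi_+^{(j)}} : \pi_+^{(j)})}$, and combining this with the sign yields exactly the coefficient~$(-1)^{|\pi|} C_{(\opp{\pi}_+:\pi_+)}$, completing the grouping and hence the proof.
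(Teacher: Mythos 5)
Your argument is correct, but it runs in the opposite direction from the paper's proof. The paper never fixes a pointed noncrossing partition and counts the tubings in its fiber: it returns to the Milnor--Moore expansion \eqref{eq:apode_fst_step}, substitutes the Catalan-form path antipode (Theorem~\ref{t:apode_paths_cat}) for each factor $\apode_{T_i}(\cycle\!:\!T_i)$, and then reindexes the resulting sum over pairs $\big(S,(\pi^1,\dots,\pi^r)\big)$ as a sum over $PNC(\cycle)$, verifying exactly the two facts you also verify: the adjacent closure decomposes arcwise because $\pi_0$ separates blocks in different arcs, and consequently $C_{(\opp{\pi}_+:\pi_+)} = \prod_i C_{(\opp{\pi^i}:\pi^i)}$. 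Your grouping route instead takes Theorem~\ref{t:apode_cycles_tub} as its starting point, and must therefore supply two statements the paper's substitution sidesteps: the injectivity of $\pi \mapsto \cycle(\pi)$, needed to compare coefficients (your observation that the zero block is recoverable as the unique cycle component is the right justification, and it quietly uses the paper's convention that cycles on $1$ or $2$ vertices are non-simple graphs distinct from paths), and the characterization of the fiber $\{t : \pi(t) = \pi\}$ via forced maximal proper tubes (your tubing-axiom argument that each arc of $\cycle\!:\!(I\setminus\pi_0)$ carries exactly one maximal proper tube, and that $\pi_0 \neq \emptyset$, is sound). The per-arc signed count you extract by comparing Proposition~\ref{p:apode_paths_tub} with Theorem~\ref{t:apode_paths_cat} is legitimate by the same injectivity consideration applied to paths. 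What your version buys is that the ``grouping-free'' assertion becomes self-evident, since you literally exhibit each fiber and show its contribution is a single signed Catalan monomial; the paper's version is shorter because the decomposition $t = \{I\} \sqcup t^1 \sqcup \dots \sqcup t^r$ had already been set up in the proof of Theorem~\ref{t:apode_cycles_tub} and is reused without re-deriving which tubings correspond to which $\pi$.
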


\begin{proof}

An application of Theorem~\ref{t:apode_paths_cat} to~\eqref{eq:apode_fst_step} yields
\begin{equation}\label{eq:apode_cy_ncp_aux1}
\apode_I(\cycle)
= - \sum_{\emptyset \subsetneq S \subseteq I} \cycle|_S \cdot \prod_{i} \Big( \sum_{\pi^i \in NC(\path_i)} (-1)^{|\pi^i|}C_{(\opp{\pi^i}:\pi^i)} \path_i(\pi^i) \Big),
\end{equation}
where~$\path_i = \cycle \! : \! T_i$ and~$T_1 \sqcup \dots \sqcup T_r$ is the partition into maximal intervals of~$\cycle\!:\!T$.

A choice of a nonempty subset~$S \subseteq I$ and of a noncrossing partition~$\pi^i$ for each~$\path_i$ is equivalent to a choice of a pointed noncrossing partition~$\pi = (S, \bigcup_i \pi^i)$ of~$\cycle$. Indeed, given a pointed noncrossing partition~$\pi$ of~$\cycle$, let~$S = \pi_0$ and~$T_1,\dots,T_r$ partition of~$I \setminus S$ into maximal intervals in~$\cycle$. Then,~$\pi^i = (\pi_+)|_{T_i}$ is the partition obtained by restricting~$\pi_+$ to sets contained in~$T_i$. Moreover, since blocks in different~$\pi^i$s are not adjacent ($S$ separates them), the adjacent closure of~$\pi$ is~$\opp{\pi} = (S, \bigcup_i \opp{\pi^i})$, and~$C_{(\opp{\pi}_+:\pi_+)} = \prod_i C_{(\opp{\pi^i}:\pi^i)}$.

Using these identifications, we can rewrite~\eqref{eq:apode_cy_ncp_aux1} as
$$
\apode_I(\cycle) = \sum_{\pi \in PNC(\cycle)} (-1)^{|\pi|} C_{(\opp{\pi}_+:\pi_+)} \cycle|_{\pi_0} \cdot \path_1(\pi^1) \cdots \path_r(\pi^r).
$$
Finally,

the result follows by noting that~$\cycle(\pi) = \cycle|_{\pi_0} \sqcup \path(\pi^1) \sqcup \dots \sqcup \path(\pi^r)$.
\end{proof}

In the following sections we will see some interesting applications of these formulas, and their relation with faces of cyclohedra.

\section{The group of characters of~$\hc$}\label{s:chars_C}

In this section, we compute the group of characters~$\bbX(\hc)$ of the Hopf monoid of paths and cycles. The following results will be fundamental in achieving this goal.

\begin{lemma}\label{l:count_intervals}
Let~$\cycle$ be a cycle on~$n$ vertices. Given integers~$k \geq 2$ and~$j_1,j_2,\dots,j_{n-k+1} \geq 0$ such that
$$
j_1 + j_2 + \dots + j_{n-k+1} = k
\qqand
j_1 + 2 j_2 + \dots + (n-k+1) j_{n-k+1} = n,
$$
there are
$$
\dfrac{n \, (k-1)!}{j_1! \, j_2! \dots j_{n-k+1}!}
$$
different partitions of~$\cycle$ into~$k$ vertex disjoint intervals with precisely~$j_i$ of them having length~$i$.
\end{lemma}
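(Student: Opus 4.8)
The plan is to count partitions of the vertex set of an $n$-cycle $\cycle$ into $k$ cyclically contiguous, vertex-disjoint intervals (arcs), subject to the prescribed multiset of interval lengths encoded by the $j_i$. The two constraints have clear combinatorial meaning: $\sum_i j_i = k$ says there are $k$ intervals in total, and $\sum_i i\,j_i = n$ says the interval lengths sum to $n$, i.e.\ the intervals tile all of $\cycle$. So I would first reinterpret the problem purely as a necklace-type enumeration: place $k$ arcs around the cycle so that exactly $j_i$ of them have length $i$, and count the distinct resulting partitions (unordered collections of arcs, since a partition carries no ordering of its blocks).

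\textbf{Main counting step.} I would count in two stages. First, fix a distinguished reference point to break the cyclic symmetry: since the cycle has $n$ labeled vertices, I can single out the arc that \emph{starts} at a chosen vertex, or more symmetrically count \emph{cyclic arrangements} of the arc-lengths and then account for the labeling. The cleanest route is to count ordered sequences and then divide by the appropriate symmetry. Concretely, a partition into intervals is determined by choosing a cyclic sequence of the $k$ arc-lengths $(\ell_1,\dots,\ell_k)$ together with a choice of where the first arc begins among the $n$ vertices. The number of distinct cyclic words on the multiset of lengths, multiplied by $n$ (the starting positions) and corrected for overcounting, should yield the formula. The number of \emph{linear} arrangements of the multiset with $j_i$ copies of length $i$ is the multinomial $\frac{k!}{j_1!\,j_2!\cdots j_{n-k+1}!}$; passing from linear to the cyclically-anchored count contributes the factor $\tfrac{1}{k}$ together with a factor $n$ from the choice of anchoring vertex, giving
$$
\frac{n}{k}\cdot\frac{k!}{j_1!\,j_2!\cdots j_{n-k+1}!} = \frac{n\,(k-1)!}{j_1!\,j_2!\cdots j_{n-k+1}!},
$$
which is exactly the claimed expression.

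\textbf{Making the symmetry count rigorous.} The delicate point—and the step I expect to be the main obstacle—is justifying the $\tfrac{n}{k}$ factor without over- or under-counting, because one must be careful about fixed points of the cyclic rotation acting on arrangements. The issue is whether a given interval partition can arise from several distinct (starting vertex, linear arrangement) pairs. I would argue that because the vertices of $\cycle$ are \emph{labeled}, each interval partition is rigid: its arcs occupy fixed labeled vertices, so the only freedom is which arc we declare "first". There are exactly $k$ arcs, hence exactly $k$ choices of distinguished arc, and each such choice together with reading lengths cyclically from a fixed labeled starting vertex yields one of the $n\cdot\frac{k!}{\prod_i j_i!}$ pointed-and-ordered configurations. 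Dividing the $n\cdot k!/\prod j_i!$ pointed linear sequences by $k$ (to forget which arc was first) gives the result, and one checks no partition is undercounted because the labeling rigidifies each block. I would verify this bookkeeping with a small example, say $n=4$, $k=2$, $(j_1,j_2)=(0,2)$ (two arcs of length $2$), where the formula predicts $\frac{4\cdot 1!}{0!\,2!}=2$, matching the two partitions $\{12,34\}$ and $\{23,41\}$; and $n=4$, $k=2$, $(j_1,j_3)=(1,1)$, predicting $\frac{4\cdot 1!}{1!\,0!\,1!}=4$, matching the four ways to choose the single vertex cut off from a length-$3$ arc.
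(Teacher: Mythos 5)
Your proof is correct and takes essentially the same approach as the paper: both count the $n\cdot k!/(j_1!\,j_2!\cdots j_{n-k+1}!)$ pairs consisting of an anchor vertex and a linear order of the multiset of lengths, then observe that each partition arises from exactly $k$ such pairs (one per choice of distinguished arc) and divide by $k$. Your remark that the labeled vertices rigidify each partition, so that periodic length sequences cause no miscounting, is precisely the content of the paper's observation that cyclic shifts of $\ell$ together with the corresponding shift of the starting position $i_0$ yield the same collection of intervals.
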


\begin{proof}
Without loss of generality, assume~$\cycle = (12\dots n)$.
Fix a \emph{starting position}~$i_0 \in [n]$,
and a total order~$\ell = (l_1,l_2,\dots,l_k)$ of the multiset of prescribed interval lengths~$\{1^{j_1},2^{j_2},\dots,(n-k+1)^{j_{n-k+1}}\}$.
The choice of~$i_0$ and~$\ell$ determines the following collection of vertex disjoint intervals of~$\cycle$:
$$
[i_0,i_0+l_1-1],[i_0+l_1,i_0+l_1+l_2-1],\dots,[i_0-l_k,i_0-1],
$$
where the operations are taken modulo~$n$.
There are
\begin{equation}\label{eq:aux_lem1}
n \cdot {k \choose j_1, j_2, \dots, j_{n-k+1}} = \dfrac{n \, k!}{j_1! \, j_2!\dots j_{n-k+1}!}
\end{equation}
possible choices for~$i_0$ and linear order~$\ell$.
However, any choice of intervals with the prescribed length appears exactly~$k$ times in this construction,
since any cyclic shift of~$\ell$ with the corresponding shift in the starting position~$i_0$ yields the same collection of intervals.
For example, choosing starting position~$i_0+l_1$ and order~$\ell' = (l_2,\dots,l_k,l_1)$ gives the same intervals as before, but listed in different order (a cyclic shift).
Dividing~\eqref{eq:aux_lem1} by~$k$ yields the desired result.
\end{proof}

We want to relate this result with nontrivial decompositions~$I = S \sqcup T$ of the vertex set of a cycle~$\cycle$. Observe that the maximal intervals in~$\cycle \!:\! T$ are never adjacent, otherwise they would form a larger interval. Thus, the total number of maximal intervals in~$\cycle \!:\! T$ is no more than~$|S|$ and no interval can contain more than~$|I| - |S|$ vertices.

\begin{proposition}\label{p:count_decomp}
Let~$\cycle \in \hc[I]$ be a cycle and~$n=|I|$.
Given integers~$k \geq 1$ and~$j_2,\dots,j_{n-k+1} \geq 0$ such that
$$
j_2 + \dots + j_{n-k+1} \leq k
\qqand
k + 1 j_2 + 2 j_3 + \dots + (n-k) j_{n-k+1} = n,
$$
there are
\begin{equation}\label{eq:aux_prop_decomp}
\dfrac{n \, (k-1)!}{j_1! \, j_2! \dots j_{n-k+1}!},
\end{equation}
decompositions~$I = S \sqcup T$ such that~$|S| = k$ and~$\cycle \! : \! T$ splits in maximal intervals with multiset of lengths~$\{ 1^{j_2},2^{j_3},\dots \}$, where~$j_1 = k - (j_2 + j_3 + \dots + j_{n-k+1})$.
\end{proposition}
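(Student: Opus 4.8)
The plan is to reduce Proposition~\ref{p:count_decomp} to Lemma~\ref{l:count_intervals} by setting up a bijection between the decompositions~$I = S \sqcup T$ being counted and the partitions of~$\cycle$ into~$k$ vertex-disjoint intervals counted by the lemma. The key observation is the one made in the paragraph preceding the statement: for a decomposition~$I = S \sqcup T$ with~$|S| = k$, the maximal intervals of~$\cycle \!:\! T$ are pairwise non-adjacent, since two adjacent maximal intervals would merge into a larger one. I would like to recover a genuine partition of all~$n$ vertices of~$\cycle$ into~$k$ intervals, so that the lemma applies with that same value of~$k$.

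First I would describe the correspondence. Given a decomposition with~$|S| = k$, the set~$T$ breaks into some number~$m \leq k$ of maximal intervals of~$\cycle$, of lengths recorded by the multiset~$\{1^{j_2}, 2^{j_3}, \dots\}$ (so~$m = j_2 + j_3 + \dots + j_{n-k+1}$). The~$k$ vertices of~$S$ together with these~$m$ intervals sit around the cycle, and since the~$T$-intervals are non-adjacent, each maximal run of~$T$ is flanked by vertices of~$S$. The natural thing is to glue each maximal~$T$-interval to the single vertex of~$S$ immediately preceding it in the cyclic order, producing intervals of~$\cycle$ of length~$(\text{length of the }T\text{-interval}) + 1$; the remaining~$k - m$ vertices of~$S$ that are not used as left-flags become singleton intervals. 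This yields a partition of~$\cycle$ into exactly~$k$ intervals whose length multiset is~$\{1^{j_1}, 2^{j_2}, 3^{j_3}, \dots, (n-k+1)^{j_{n-k+1}}\}$, where a~$T$-interval of length~$i$ contributes an interval of length~$i+1$ and the leftover singletons account for~$j_1 = k - (j_2 + \dots + j_{n-k+1})$ intervals of length~$1$. One then checks this map is a bijection: the inverse reads off, from a partition of~$\cycle$ into~$k$ intervals with the shifted length profile, which interval-initial vertices belong to~$S$ and strips the first vertex of each non-singleton interval back out to form~$T$.

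With the bijection in place, the count follows directly from Lemma~\ref{l:count_intervals} applied to the length multiset~$\{1^{j_1}, 2^{j_2}, \dots, (n-k+1)^{j_{n-k+1}}\}$, which satisfies the lemma's two hypotheses: the number of parts is~$j_1 + j_2 + \dots + j_{n-k+1} = k$, and the total length is~$j_1 + 2j_2 + \dots + (n-k+1)j_{n-k+1} = (k - m) + \sum_{i \geq 2}(i)\,j_i = k + \sum_{i\geq 2}(i-1)j_i = k + 1\,j_2 + 2\,j_3 + \dots + (n-k)\,j_{n-k+1} = n$, exactly the second hypothesis of the proposition after reindexing. The lemma then gives the count~$\tfrac{n\,(k-1)!}{j_1!\,j_2!\cdots j_{n-k+1}!}$, which is~\eqref{eq:aux_prop_decomp}.

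I expect the main obstacle to be verifying that the gluing map is well defined and bijective in the boundary cases, rather than the arithmetic, which is essentially a relabeling. Specifically, one must confirm that distinct decompositions never glue to the same interval partition and that every interval partition with the shifted profile arises; the delicate point is the consistent choice of flagging each~$T$-interval with its cyclic predecessor in~$S$, and making sure this is unambiguous when~$S$-vertices are adjacent to each other or when~$T$ is a single large interval. Once the convention "each non-singleton interval of~$\cycle$ has its first vertex in~$S$ and all later vertices in~$T$, while singletons lie in~$S$" is stated cleanly, the bijection and hence the count are immediate.
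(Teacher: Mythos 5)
Your bijection is, up to a mirror convention, the one the paper itself uses: the paper takes $S$ to be the set of \emph{right} endpoints of the $k$ intervals and, in reverse, attaches to each maximal interval of $\cycle\!:\!T$ the \emph{next} vertex in the cyclic order, whereas you attach each maximal $T$-interval to its cyclic \emph{predecessor}, so your intervals begin at an $S$-vertex rather than ending at one. For $k \geq 2$ this is fine: distinct maximal $T$-runs have distinct predecessors (a vertex of a cycle has a unique successor), every glued interval is proper of length at most $n-k+1 \leq n-1$ and so has a well-defined first vertex for the inverse map, and your verification of the lemma's two hypotheses (number of parts equals $j_1 + \dots + j_{n-k+1} = k$, total length equals $n$) is correct, so Lemma~\ref{l:count_intervals} delivers the count.

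The genuine gap is the case $k = 1$, which the proposition permits and which the paper disposes of separately before setting up the bijection. Lemma~\ref{l:count_intervals} is stated only for $k \geq 2$, so you cannot invoke it ``with that same value of $k$'' when $k = 1$; and your gluing map actually fails there: $T$ is a single maximal interval of length $n-1$, gluing it to its predecessor produces the whole vertex set $I$, and all $n$ decompositions (one for each choice of the singleton $S$) collapse onto the single trivial interval partition $\{I\}$. The map is $n$-to-$1$ rather than bijective, and the inverse cannot recover a distinguished first vertex of the full cycle --- this is exactly the degeneracy that forces $k \geq 2$ in the lemma, since a length-$n$ ``interval'' has no endpoints and the cyclic-shift overcount is $n$-fold instead of $k$-fold. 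You half-noticed the issue (``when $T$ is a single large interval''), but for $k \geq 2$ that situation is harmless; the real boundary case is $k=1$, and it needs the paper's two-line direct argument: the hypotheses then force $j_{n-k+1} = j_n = 1$ and $j_1 = \dots = j_{n-1} = 0$ (for $n \geq 2$), the formula evaluates to $n$, and this matches the $n$ choices of the singleton $S$ by inspection. With that case added, your proof is complete and essentially identical to the paper's.
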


\begin{proof}
Let us first deal with the case~$k=1$. In this situation, the inequalities above imply that $j_{n-k+1} = 1$ and~$j_1 = \dots = j_{n-k} = 0$. The expression \eqref{eq:aux_prop_decomp} takes value~$n$, which agrees with all the possible choices for~$S$. From now on we assume~$k \geq 2$.

We will construct a bijection between the decompositions~$I = S \sqcup T$ with the conditions above and configurations of~$k$ intervals in~$\cycle$ with multiset of lengths~$\{1^{j_1},2^{j_2},\dots,(n-k+1)^{j_{n-k+1}}\}$. The result will then follow from Lemma~\ref{l:count_intervals}. The following is an example of the bijection we will construct.

\begin{figure}[ht]
$$\begin{gathered}\includegraphics[scale=.7,page=1]{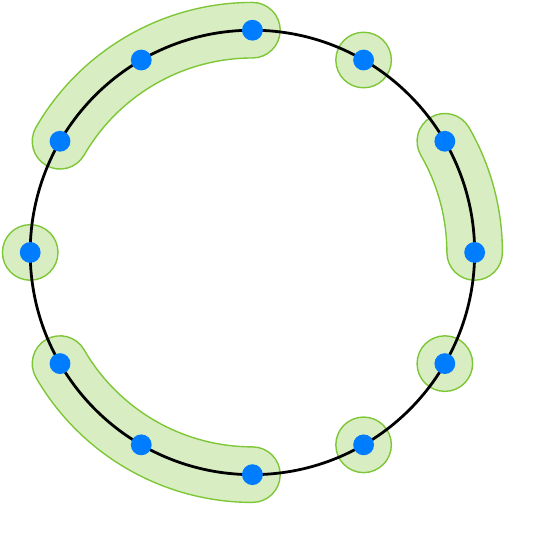}\end{gathered}
\longrightarrow
\begin{gathered}\includegraphics[scale=.7,page=2]{ex_biject2}\end{gathered}
\longrightarrow
\begin{gathered}\includegraphics[scale=.7,page=3]{ex_biject2}\end{gathered}$$
\caption{An example of the bijection described below. A partition into $7$ intervals corresponds to a decomposition $I = S \sqcup T$ where $|S| = 7$.}
\end{figure}

Fix an orientation of~$\cycle$, in the example above it is the clockwise orientation. Given a collection of intervals~$\{I_1,\dots,I_k\}$ of~$\cycle$ with multiset of lengths~$\{1^{j_1},2^{j_2},\dots,(n-k+1)^{j_{n-k+1}}\}$, let~$S$ be the collection of ``right endpoints'' (the last element according to the fixed orientation) of these intervals and~$T = I \setminus S$. Since we have chosen exactly one point for each interval, we have~$|S| = k$. An interval~$I_r$ with~$j \geq 2$ elements becomes a maximal interval of~$\cycle\!:\!T$ with~$j-1$ elements, since we have removed an endpoint and thus not disconnected the remaining vertices in~$I_r$.

Conversely, given such a decomposition~$I = S \sqcup T$, attach to each maximal interval of~$\cycle\!:\!T$ the \emph{next} element not in the interval. It will necessarily be an element of~$S$. The remaining~$k - (j_2 + \dots + j_{n-k+1}) = j_1$ elements of~$S$ become intervals of length~$1$.
\end{proof}

We now turn our attention to describing the group of characters of~$\bbX(\hc)$. Since~$\rmF$ is a Hopf monoid quotient of~$\hc$, its group of characters~$\bbX(\rmF)$ is a subgroup of~$\bbX(\hc)$.

\begin{theorem}[{\citep[Theorem 10.5]{aa17}}]\label{t:chars_F}
The group of characters~$\rmF$ is isomorphic to the group of series of the form
$$
g(x) = x + a_1 x^2 + a_2 x^3 + \dots
$$
under composition.
\end{theorem}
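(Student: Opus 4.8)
The plan is to exhibit an explicit group isomorphism between~$\bbX(\rmF)$ and the group of series of the claimed form, under which the convolution product becomes composition of series. First I would record that a character~$\zeta$ of~$\rmF$ is completely determined by a single sequence of scalars. By the multiplicativity axiom,~$\zeta_I(g)$ factors as the product of the values of~$\zeta$ on the connected components of~$g$, and each such component is a path. By the isomorphism-invariance axiom, the value on a path depends only on its number of vertices, since any two paths on~$n$ vertices are isomorphic (the reverse word represents the same path). Thus~$\zeta$ is encoded by the scalars~$z_n := \zeta_{[n]}(\path_n)$ for~$n \geq 1$ (with~$z_0 := \zeta_\emptyset(\epsilon) = 1$); conversely, any sequence~$(z_n)_{n\geq 1}$ defines a character by declaring~$\zeta_I(g)$ to be the product of~$z_{|c|}$ over connected components~$c$ of~$g$, which is automatically multiplicative, isomorphism-invariant and unital. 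This gives a bijection~$\bbX(\rmF) \cong \Kb^{\mathbb{N}}$, and I would then define~$\Phi(\zeta) := g_\zeta(x) = x + \sum_{n \geq 1} z_n\, x^{n+1}$, so that~$a_n = z_n$.

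The main work is to check that~$\Phi$ turns convolution into composition, i.e.~$g_{\zeta * \xi} = g_\zeta \circ g_\xi$. Writing~$w_n := \xi_{[n]}(\path_n)$, the convolution formula gives
\[
(\zeta*\xi)_{[n]}(\path_n) = \sum_{[n] = S \sqcup T} \zeta_S(\path_n|_S)\, \xi_T(\path_n/_S),
\]
where~$\path_n|_S$ is the path on~$S$ and~$\path_n/_S = \path_n \!:\! T$ is the disjoint union of the maximal runs of consecutive vertices of~$T$. Hence the~$S$-summand equals~$z_{|S|}$ times the product of the~$w$'s indexed by the run lengths of~$T$. The key observation is a rigidity coming from the linear order on~$\path_n$: if~$|S| = k$, the elements of~$S$ cut~$\path_n$ into~$k+1$ ordered gaps of sizes~$m_0, \dots, m_k \geq 0$ with~$\sum_i m_i = n-k$, and the tuple~$(k; m_0, \dots, m_k)$ determines~$S$ uniquely. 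Using~$w_0 = 1$ to absorb the empty gaps, this identifies
\[
(\zeta*\xi)_{[n]}(\path_n) = \sum_{k \geq 0} z_k \!\!\sum_{\substack{m_0 + \dots + m_k = n-k \\ m_i \geq 0}}\!\! w_{m_0}\cdots w_{m_k},
\]
which is exactly the coefficient of~$x^{n+1}$ in~$g_\zeta(g_\xi(x)) = \sum_{k\geq 0} z_k\, (g_\xi(x))^{k+1}$.

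I expect the main obstacle to be precisely this identification, and in particular getting the normalization right: because the underlying structures are linearly ordered paths rather than unstructured sets, each subset~$S$ corresponds to a \emph{unique} configuration of gaps, so no binomial coefficients intervene and the correct generating function is ordinary rather than exponential (matching~$a_n = z_n$ with no factorials). Once this is in place, it remains only to note that~$\Phi$ is a bijection onto all series of the form~$x + \sum_{n\geq 1} a_n x^{n+1}$, since the~$z_n$ range freely over~$\Kb$, and that it sends the unit character to the identity series~$x$; together with~$g_{\zeta*\xi} = g_\zeta \circ g_\xi$ this makes~$\Phi$ a group isomorphism, proving the theorem.
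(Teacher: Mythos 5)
Your proposal is correct, and it supplies a proof that this paper does not actually contain: Theorem~\ref{t:chars_F} is imported from~\citep[Theorem 10.5]{aa17} without proof, so the only in-paper point of comparison is the proof of Theorem~\ref{t:chars_hc}, which handles the cyclic analogue and explicitly defers the path component to the cited result. Your argument and the paper's cycle argument share the same skeleton --- reduce a character to its values $z_n$ on standard connected structures via multiplicativity and isomorphism-invariance, expand the convolution $(\zeta*\xi)(\path_n)$ over decompositions $[n]=S\sqcup T$, and match the result with coefficients of a composite series --- but they diverge at the combinatorial core. The paper's route (Lemma~\ref{l:count_intervals}, Proposition~\ref{p:count_decomp}) counts decompositions grouped by the \emph{multiset} of interval lengths, producing the factor $n\,(k-1)!/(j_1!\cdots j_{n-k+1}!)$ and then invoking the Fa\`a di Bruno formula with ordinary Bell polynomials; this bookkeeping is forced by the cyclic symmetry (hence the division by $k$ in the lemma). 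Your ordered-gaps bijection $S \leftrightarrow (k;m_0,\dots,m_k)$ exploits precisely the rigidity that the cycle lacks: on a linearly ordered path each subset $S$ corresponds to a unique ordered gap vector, so the sum $\sum_k z_k \sum_{m_0+\cdots+m_k=n-k} w_{m_0}\cdots w_{m_k}$ is literally the coefficient of $x^{n+1}$ in $g_\zeta(g_\xi(x))=\sum_k z_k (g_\xi(x))^{k+1}$, with no multinomial coefficients and no Bell-polynomial detour. Your observation that this is why the ordinary (rather than exponential) normalization $a_n=z_n$ is the right one is exactly on target, and your verification of the remaining group-isomorphism axioms (bijectivity, unit to unit) is complete; the only caveat worth making explicit is the convention $z_0=w_0=1$ used to absorb the empty set $S$ and empty gaps, which you do state.
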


Let~$G$ be the group of pairs of power series~$\big(g(x),h(x)\big)$ of the form
$$
g(x) = x + a_1 x^2 + a_2 x^3 + \dots
\qqand
h(x) = c_1 x + c_2 x^2 + c_3 x^3 + \dots
$$
under the following operation
$$
(g_1,h_1) \cdot (g_2,h_2) = (g_1 \circ g_2 , h_1 \circ g_2 + h_2).
$$
One easily verifies that this operation is associative, that~$(x,0)$ is the unit of~$G$, and that~$(g,h)^{-1} = (g^{\langle -1 \rangle},-h \circ g^{\langle -1 \rangle})$, where~$g^{\langle -1 \rangle}$ denotes the compositional inverse of~$g$. Observe that~$\bbX(\rmF)$ naturally embeds~$G$ by means of the map~$g(x) \mapsto \big( g(x), 0\big)$.

\begin{theorem}\label{t:chars_hc}
The group of characters of~$\hc$ is isomorphic to~$G$.
\end{theorem}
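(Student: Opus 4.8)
The plan is to write down an explicit bijection $\Phi\colon \bbX(\hc)\to G$ and check that it intertwines the convolution product with the product of~$G$. Since a character is multiplicative and isomorphism-invariant, and since every structure of~$\hc$ is canonically the disjoint union (product) of its connected components, $\zeta$ is freely and uniquely determined by its values on isomorphism classes of connected structures, namely the paths~$\path_n$ and the cycles~$\cycle_n$ for~$n\geq 1$. I therefore attach to~$\zeta\in\bbX(\hc)$ the pair~$\Phi(\zeta)=(g_\zeta,h_\zeta)$ with
$$
g_\zeta(x) = x + \sum_{n\geq 1}\zeta(\path_n)\,x^{n+1}
\qqand
h_\zeta(x) = \sum_{n\geq 1}\zeta(\cycle_n)\,\frac{x^n}{n}.
$$
The constant coefficient~$1$ in~$g_\zeta$ records~$\zeta(\epsilon)=1$, and because connected structures are classified up to isomorphism by their number of vertices together with the path/cycle dichotomy, $\Phi$ is a bijection onto~$G$: from a pair~$(g,h)$ one reads off~$\zeta(\path_n)$ and~$\zeta(\cycle_n)$ from the coefficients and extends multiplicatively. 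It remains to prove that~$\Phi$ is a homomorphism, i.e.\ that~$g_{\zeta*\xi}=g_\zeta\circ g_\xi$ and~$h_{\zeta*\xi}=h_\zeta\circ g_\xi + h_\xi$.

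For the first identity I would unwind the convolution on a single path. Writing~$p_m=\zeta(\path_m)$, $q_m=\xi(\path_m)$ with~$p_0=q_0=1$, and using~$\Delta_{S,T}(\path_n)=(\path_n|_S,\path_n\!:\!T)$, the restriction~$\path_n|_S$ is a single path on~$|S|$ vertices while~$\path_n\!:\!T$ is the disjoint union of the maximal runs of~$T$; hence, grouping by~$k=|S|$ and by the~$k+1$ gap lengths summing to~$n-k$,
$$
(\zeta*\xi)(\path_n) = \sum_{k\geq 0} p_k \sum_{g_0+\dots+g_k=n-k}\prod_{i=0}^{k} q_{g_i}.
$$
Setting~$F_\zeta(x)=g_\zeta(x)/x=\sum_m p_m x^m$, this sum is exactly~$[x^n]\,Q(x)F_\zeta\!\big(xQ(x)\big)$ with~$Q(x)=\sum_m q_m x^m$, so~$g_{\zeta*\xi}(x)=g_\xi(x)F_\zeta(g_\xi(x))=g_\zeta(g_\xi(x))$. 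This is precisely the Fa\`a di Bruno computation underlying Theorem~\ref{t:chars_F}, which I may quote for the $g$-component.

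The heart of the matter is the $h$-component. For a cycle the coproduct behaves very differently: for every nonempty~$S$ the restriction~$\cycle_n|_S$ is again a cycle~$\cycle_{|S|}$ (this is exactly the loop/double-edge caveat in the definition of the coproduct, which keeps~$\cycle_1,\cycle_2$ honest cycles), while~$\cycle_n\!:\!T$ is a disjoint union of paths, the maximal arcs of~$T$. Separating the term~$S=\emptyset$, which contributes~$\xi(\cycle_n)$ and accounts for the additive summand~$+\,h_\xi$, I obtain
$$
(\zeta*\xi)(\cycle_n) - \xi(\cycle_n) = \sum_{k=1}^{n}\zeta(\cycle_k)\sum_{|S|=k}\ \prod_{\text{arcs of }T}\xi(\path_{(\cdot)}),
$$
the inner product running over the~$k$ cyclic gaps cut out by~$S$ (empty gaps contributing~$\xi(\epsilon)=1$). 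The claim~$h_{\zeta*\xi}=h_\zeta\circ g_\xi + h_\xi$ then reduces, coefficient by coefficient, to the identity
$$
\sum_{|S|=k}\ \prod_{\text{arcs}}\xi(\path_{(\cdot)}) = \frac{n}{k}\,[x^n]\,g_\xi(x)^k,
$$
and I expect this to be the main obstacle. The right-hand side is a sum over \emph{ordered} tuples of gap lengths, whereas the left-hand side counts \emph{cyclic} configurations; the factor~$n/k$ is precisely the cyclic-symmetry correction. This is exactly the content of Lemma~\ref{l:count_intervals} and Proposition~\ref{p:count_decomp}, once one observes that a size-$k$ subset~$S$ is the datum of~$k$ disjoint intervals of~$\cycle_n$ whose lengths exceed the gap lengths by one. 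This same correction is what forces the denominator~$n$ in~$h_\zeta(x)=\sum\zeta(\cycle_n)x^n/n$, and it is the single genuinely combinatorial input of the argument.

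Granting the two generating-function identities, $\Phi(\zeta*\xi)=\big(g_\zeta\circ g_\xi,\ h_\zeta\circ g_\xi + h_\xi\big)=\Phi(\zeta)\cdot\Phi(\xi)$, so~$\Phi$ is an isomorphism of groups. As a consistency check, characters pulled back along the quotient~$\hc\twoheadrightarrow\rmF$ vanish on every structure containing a cycle, hence have~$h_\zeta=0$, recovering the embedding~$g\mapsto(g,0)$ of~$\bbX(\rmF)$ into~$G$ noted before the statement.
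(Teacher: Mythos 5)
Your proposal is correct and follows essentially the same route as the paper's proof: the same bijection $\Phi$ reading off $\zeta(\path_n)$ and $\zeta(\cycle_n)$ as coefficients, the same appeal to Theorem~\ref{t:chars_F} for the $g$-component, the same separation of the $S=\emptyset$ term to produce the additive summand $h_\xi$, and the same combinatorial core (Lemma~\ref{l:count_intervals} via Proposition~\ref{p:count_decomp}) counting decompositions of the cycle with prescribed arc lengths. Your key identity $\sum_{|S|=k}\prod_{\text{arcs}}\xi(\path_{(\cdot)})=\tfrac{n}{k}\,[x^n]\,g_\xi(x)^k$ is precisely the paper's step $\sum_k \tfrac{c_k}{k}\widehat{B}_{n,k}(1,\alpha_1,\dots,\alpha_{n-k})$ rewritten as a coefficient extraction, since $\widehat{B}_{n,k}(1,\alpha_1,\dots)=[x^n]\,g_\xi(x)^k$.
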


Before diving into the proof of this theorem, let us recall the Faà di Bruno Formula~\citep[Chapter 2, Section 8, Formula (45a)]{riordan} for the composition of power series. Let
$$
f(x) = f_1 x + f_2 x^2 + f_3 x^3 + \dots
\qqand
g(x) = g_1 x + g_2 x^2 + g_3 x^3 + \dots
$$
be two power series and~$h = f \circ g$. The coefficients of~$h$ are determined by
$$
h_n = \sum_{1 \leq k \leq n} f_k \widehat{B}_{n,k}(g_1,g_2,\dots,g_{n-k+1}),
$$
where~$\widehat{B}_{n,k}$ is the ordinary Bell polynomial
$$
\widehat{B}_{n,k}(x_1,x_2,\dots,x_{n-k+1})
= \sum \dfrac{k!}{j_1! \, j_2! \dots j_{n-k+1}!} x_1^{j_1} x_2^{j_2} \dots x_{n-k+1}^{j_{n-k+1}}.
$$
The sum is over all nonnegative integers~$j_1,\dots,j_{n-k+1}$ such that
$$
j_1 + j_2 + \dots + j_{n-k+1} = k
\qqand
1\, j_1 + 2\, j_2 + \dots + (n-k+1)\, j_{n-k+1} = n.
$$

\begin{proof}[{Proof of Theorem~\ref{t:chars_hc}}]
All paths on the same number of vertices are isomorphic, the same is true for cycles. Therefore, a character~$\zeta \in \bbX(\hc)$ is determined by the values is takes on the graphs~$\path_n = 12\dots n$ and~$\cycle_n = (12\dots n)$ for~$n \geq 1$. We define a function~$\bbX(\hc) \rightarrow G$ by
$$
\zeta \mapsto \Big( x + \zeta_{[1]}( \path_1 )x^2 + \zeta_{[2]}( \path_2 )x^3 + \dots
\,\,\, ,\,\,\,
\zeta_{[1]}( \cycle_1 )x + \zeta_{[2]}( \cycle_2 ) \dfrac{x^2}{2} + \zeta_{[3]}( \cycle_3 ) \dfrac{x^3}{3} + \dots \Big).
$$
Since power series are uniquely determined by its coefficients, this map is clearly a bijection. We proceed to prove that it is a group homomorphism.

Let~$\zeta,\xi$ be two characters of~$\hc$, with corresponding pairs of power series
$$
(g_1(x),h_1(x)) = \Big( x + \sum_{n\geq 1} a_n x^{n+1},\sum_{n\geq 1} c_n \dfrac{x^n}{n} \Big)
\qqand
(g_2(x),h_2(x)) = \Big( x + \sum_{n\geq 1} \alpha_n x^{n+1},\sum_{n\geq 1} \gamma_n \dfrac{x^n}{n} \Big),
$$
where
$$
a_n = \zeta(\path_n)
\qquad
c_n = \zeta(\cycle_n)
\qquad\qquad
\alpha_n = \xi(\path_n)
\qquad
\gamma_n = \xi(\cycle_n).
$$
We want to show that the pair of power series associated to~$\zeta*\xi$ is~$(g_1 \circ g_2 , h_1 \circ g_2 + h_2)$. The result in the first entry of the pair is precisely Theorem~\ref{t:chars_F}. We proceed to explicitly compute the second power series associated to~$\zeta*\xi$.
\begin{equation}\label{eq:convol_gamma}
\begin{array}{RL}
\dfrac{(\zeta*\xi)(\cycle_n)}{n}
& = \dfrac{1}{n} \sum_{[n] = S \sqcup T} \zeta(\cycle_n|_S) \xi( \cycle_n/_S ) \\
& = \dfrac{1}{n}\sum_{ \emptyset \neq S \subseteq [n]} c_{|S|} \alpha_{|T_1|} \dots \alpha_{|T_r|} + \dfrac{\gamma_n}{n},
\end{array}
\end{equation}
where~$T = T_1 \sqcup \dots \sqcup T_r$ is the natural decomposition of~$[n] \setminus S$ into maximal intervals in~$\cycle_n$.
Grouping similar terms and using Proposition~\ref{p:count_decomp}, the coefficient of~$c_k \alpha_1^{j_2} \alpha_2^{j_3} \dots \alpha_{n-k}^{j_{n-k+1}}$ in the sum above is
$$
\dfrac{n \, (k-1)!}{j_1! \, j_2!\dots j_{n-k+1}!}.
$$
Thus, we can rewrite~\eqref{eq:convol_gamma} as
$$
\begin{array}{RL}
& \dfrac{1}{n} \bigg( \sum_{\substack{j_1 + j_2 + \dots + j_{n-k+1} = k \\ j_1 + 2 j_2 + \dots (n-k+1) j_{n-k+1} = n}}
\dfrac{n \, (k-1)!}{j_1! \, j_2!\dots j_{n-k+1}!}
c_k \alpha_1^{j_2} \alpha_2^{j_3} \dots \alpha_{n-k}^{j_{n-k+1}} \bigg) + \dfrac{\gamma_n}{n}\\
= & \sum_{1 \leq k \leq n} \dfrac{c_k}{k} \hat{B}_{n,k}(1,\alpha_1,\dots,\alpha_{n-k}) + \dfrac{\gamma_n}{n},
\end{array}
$$
which, by the Faà di Bruno Formula, is precisely the coefficient of~$x^n$ in~$h_1 \circ g_2 + h_2$.
\end{proof}

\section{Associahedra, cyclohedra and inversion in pairs of power series}\label{s:ass_cyc_ps}

Let $\opp{\rmA}$ denote the Hopf submonoid of $\opp{\GP}$ generated by the classes of associahedra, and $\opp{\hc}$ the Hopf submonoid generated by the classes of associahedra and cyclohedra. The Hopf monoid morphism $\hc \rightarrow \opp{\GP}$ sending each graph $g \in \hc[I]$ to the class of its graph associahedron $\ass_g \in \opp{\GP}$ is not injective, since $\ass_{\path_1} = \ass_{\cycle_1}$ and $\ass_{\path_2} = \ass_{\cycle_2}$. The following diagram summarizes the relations between the Hopf monoids we have defined so far.
$$
\begin{tikzcd}[row sep=tiny]
\hc \arrow[r, two heads] & \opp{\hc} \arrow[rd, hook] & \\
 & & \opp{\GP} \\
\rmF \arrow[r, "\cong"'] \arrow[uu, hook] & \opp{\rmA} \arrow[uu, hook] \arrow[ru, hook] &
\end{tikzcd}$$
The isomorphism $F \xrightarrow{\,\cong\,} \rm{A}$ was first described by Aguiar and Ardila~\citep[Proposition 25.7]{aa17}. The Hopf monoid $\opp{\hc}$ is the quotient of $\hc$ obtained by identifying (the graph associahedra of) $\path_n$ and $\cycle_n$ for $n=1,2$.

\begin{corollary}
The group of characters of $\opp{\hc}$ is isomorphic to the subgroup of $G$ consisting of pairs of power series
$$
( x + a_1x^2 + a_2 x^3 + a_3 x^4 + \dots
\,\,\, ,\,\,\,
c_1x + c_2\dfrac{x^2}{2} + c_3\dfrac{x^3}{3} + \dots) \in G
$$
such that $a_1 = c_1$ and $a_2 = c_2$.
\end{corollary}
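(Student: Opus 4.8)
The plan is to exploit that $\opp{\hc}$ is a Hopf monoid quotient of $\hc$, so that $\bbX(\opp{\hc})$ embeds as a subgroup of $\bbX(\hc)$, and then to pin down that subgroup inside $G$ via Theorem~\ref{t:chars_hc}. Write $q\colon \hc \twoheadrightarrow \opp{\hc}$ for the canonical surjection. Precomposition $\eta \mapsto \eta \circ q$ gives a map $q^{*}\colon \bbX(\opp{\hc}) \to \bbX(\hc)$, and it is a group homomorphism for the convolution product because $q$ respects coproducts. It is injective: if $\eta \circ q = \eta' \circ q$, then surjectivity of each $q\colon \hc[I] \to \opp{\hc}[I]$ forces $\eta = \eta'$. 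Hence $\bbX(\opp{\hc})$ is isomorphic to the image of $q^{*}$, namely the characters of $\hc$ that are constant on the fibers of $q$; it remains only to describe this image under $\bbX(\hc) \cong G$.

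First I would use the description of $\opp{\hc}$ recalled before the statement: it is the quotient of $\hc$ obtained by identifying $\path_n$ with $\cycle_n$ for $n=1,2$, equivalently $q(g)=q(g')$ iff $\ass_g \equiv \ass_{g'}$. Since $q(\path_1)=q(\cycle_1)$ and $q(\path_2)=q(\cycle_2)$, any character $\zeta$ in the image of $q^{*}$ must satisfy $\zeta(\path_1)=\zeta(\cycle_1)$ and $\zeta(\path_2)=\zeta(\cycle_2)$. Translating through Theorem~\ref{t:chars_hc}, where $a_n=\zeta(\path_n)$ and $c_n=\zeta(\cycle_n)$, this says exactly $a_1=c_1$ and $a_2=c_2$. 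This establishes the inclusion ``only if.''

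For the converse I would show that these two equalities already force $\zeta$ to be constant on every fiber of $q$. The key tools are multiplicativity of characters and $\ass_{g_1 \sqcup g_2} = \ass_{g_1} \times \ass_{g_2}$ (Proposition~\ref{p:faces_g_ass}): two graphs $g,g' \in \hc[I]$ satisfy $q(g)=q(g')$ if and only if they are related by a finite sequence of local moves, each swapping a connected $\path_1$- or $\path_2$-component for the corresponding $\cycle_1$ or $\cycle_2$ on the same vertices. Because $\zeta(g)=\prod_{w}\zeta(w)$ over the connected components $w$ of $g$, each such move leaves $\zeta(g)$ unchanged as soon as $\zeta(\path_1)=\zeta(\cycle_1)$ and $\zeta(\path_2)=\zeta(\cycle_2)$. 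Thus $\zeta$ descends, and the image of $q^{*}$ is precisely $\{(g,h)\in G : a_1=c_1,\ a_2=c_2\}$, giving the claimed isomorphism.

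The main obstacle is this converse step: justifying rigorously that the congruence defining $\opp{\hc}$ is generated \emph{only} by $\path_1 \sim \cycle_1$ and $\path_2 \sim \cycle_2$, with no further accidental coincidences $\ass_g \equiv \ass_{g'}$ among sets of paths and cycles. This rests on the facts that associahedra and cyclohedra of size $n \geq 3$ are normally inequivalent and indecomposable as Cartesian products, so that the normal-equivalence class of $\ass_g$ determines and is determined by the multiset of components of $g$ up to the size-$1$ and size-$2$ path/cycle swaps; once this is in place, the multiplicativity argument closes the proof. As a final sanity check (not logically required, since the image of a group homomorphism is automatically a subgroup) I would verify directly from $(g_1,h_1)\cdot(g_2,h_2)=(g_1 \circ g_2,\, h_1 \circ g_2 + h_2)$ that the conditions $a_1=c_1$ and $a_2=c_2$ are preserved under the product, by comparing the coefficients of $x^2,x^3$ in $g_1 \circ g_2$ against those of $x,x^2$ in $h_1 \circ g_2 + h_2$.
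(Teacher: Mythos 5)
Your proposal is correct and takes essentially the same approach as the paper, which states this corollary without an explicit proof: it is meant to follow from the assertion just before it that $\opp{\hc}$ is the quotient of $\hc$ identifying $\path_n$ with $\cycle_n$ for $n=1,2$, combined with Theorem~\ref{t:chars_hc} via the standard quotient-characters argument you spell out (with $a_n=\zeta(\path_n)$, $c_n=\zeta(\cycle_n)$ translating the identifications into $a_1=c_1$, $a_2=c_2$). The one input you flag as the main obstacle---that the congruence is generated by exactly these two identifications, with no further coincidences $\ass_g \equiv \ass_{g'}$---is likewise asserted, not proved, in the paper, so your treatment is if anything more explicit about what is being used.
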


The standard associahedron~$\ass_n$ is the graph associahedron of the path~$\path_n = 12\dots n$, this is Loday's realization of the associahedron.
Analogously, the standard cyclohedron~$\cyc_n$ is the graph associahedron of the cycle~$\cycle_n = (12\dots n)$.
The description of the faces of~$\ass_n$ and~$\cyc_n$ in terms of tubings (see Section~\ref{ss:g_ass}) and the definitions in Section~\ref{s:apode_C} show the following.

\begin{itemize}
\item Let~$t$ be a tubing of~$\path_n$ and~$\pi(t) = \{ \pi_1 , \dots , \pi_k \}$ be the associated noncrossing partition of~$[n]$.
		The graph~$\path_n(t)$ is the disjoint union of the~$k$ paths~$\path^i := \path_n|_{\pi_i}$, one for each block~$\pi_i \in \pi(t)$.
		Thus, the face~$F_t \leq \ass_n$ is normally equivalent to the following product of associahedra:
		$$
		\ass_{\path^1} \times \dots \times \ass_{\path^k},
		$$
		which in turn is isomorphic to the following product of \emph{standard} associahedra
		$$
		\ass_{|\pi_1|} \times \dots \times \ass_{|\pi_k|}.
		$$
		Therefore, every face of~$\ass_n$ naturally decomposes as the product of (polytopes isomorphic to) other standard associahedra of lower dimension.
		
\item Let~$t$ be a tubing of~$\cycle_n$ and~$\pi(t) = (\pi_0, \{\pi_1 , \dots , \pi_k \})$ be the associated pointed noncrossing partition of~$[n]$.

		Then,~$\cycle_n(t)$ is the disjoint union of a cycle~$\cycle^0 = \cycle_n|_{\pi_0}$ in~$\pi_0$ and a path~$\path^i = (\cycle_n/_{\pi_0})|_{\pi_i}$ on each~$\pi_i \in \pi(t)_+$.
		The face~$F_t \leq \cyc_n$ is normally equivalent to the following product of graph associahedra:
		$$
		\ass_{\cycle^0} \times \ass_{\path^1} \times \dots \times \ass_{\path^k},
		$$
		which in turn is isomorphic to the following product of a \emph{standard} cyclohedron and \emph{standard} associahedra
		$$
		\cyc_{|\pi_0|} \times \ass_{|\pi_1|} \times \dots \times \ass_{|\pi_k|}.
		$$
\end{itemize}

Aguiar and Ardila use the previous description of the faces of an associahedron along with their antipode formula for generalized permutahedra (Theorem~\ref{t:apode_GP}) and the description of the group of characters of $\rmF \cong \opp{A}$ (Theorem~\ref{t:chars_F}) to describe inversion in the group of power series under composition, in terms of the faces of associahedra.

\begin{theorem}[{\citep[Theorem 11.3]{aa17}}]\label{t:polyt_Linv}
The compositional inverse of
$$
g(x) = x + a_1 x^2 + a_2 x^3 + \dots
\qquad \text{is} \qquad
g^{\langle -1 \rangle}(x) = x + b_1 x^2 + b_2 x^3 + \dots,
$$
where
\begin{equation}\label{eq:polyt_Linv}
b_n = \sum_{F \leq \ass_n} (-1)^{n - \dim F} a_F
\end{equation}
and we write~$a_F = a_{f_1} \dots a_{f_k}$ for each face~$F \cong \ass_{f_1} \times \dots \times \ass_{f_k}$ of the associahedron~$\ass_n$.
\end{theorem}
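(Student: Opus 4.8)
The plan is to realize $g$ as a character of $\opp{\rmA}\cong\rmF$, identify its compositional inverse with the convolution inverse of that character, and then evaluate the convolution inverse using the antipode formula for generalized permutahedra together with the multiplicativity of characters. First I would let $\zeta\in\bbX(\opp{\rmA})$ be the character corresponding to $g(x)=x+\sum_{n\geq 1}a_n x^{n+1}$ under the isomorphism of Theorem~\ref{t:chars_F}; since $\path_n$ corresponds to $\ass_n=\ass_{\path_n}$ across $\rmF\cong\opp{\rmA}$, this means $\zeta(\ass_n)=a_n$ for every $n$. Because any group isomorphism carries inverses to inverses, the character $\zeta^{*-1}$ corresponds to the compositional inverse $g^{\langle -1\rangle}(x)=x+\sum_{n\geq 1}b_n x^{n+1}$, so in particular $b_n=\zeta^{*-1}(\ass_n)$.

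By Theorem~\ref{t:inverse_chars} we have $\zeta^{*-1}=\zeta\circ\apode$, so it remains to compute $\zeta\big(\apode_{[n]}(\ass_n)\big)$. Here I would invoke the antipode formula of Theorem~\ref{t:apode_GP}: although stated for $\GP$, it descends to $\opp{\GP}$ and hence to $\opp{\rmA}$, giving
$$
\apode_{[n]}(\ass_n)=(-1)^n\sum_{F\leq\ass_n}(-1)^{\dim F}\,F,
$$
where the sum ranges over all faces $F$ of the standard associahedron. Applying $\zeta$ and using the face decomposition recalled just before the theorem, namely that each face satisfies $F\cong\ass_{f_1}\times\dots\times\ass_{f_k}$, the multiplicativity of $\zeta$ yields $\zeta(F)=\zeta(\ass_{f_1})\cdots\zeta(\ass_{f_k})=a_{f_1}\cdots a_{f_k}=a_F$. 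Collecting signs via $(-1)^n(-1)^{\dim F}=(-1)^{n-\dim F}$ then produces the claimed expression for $b_n$.

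The point requiring the most care is the passage to $\opp{\GP}$: in the quotient the antipode formula is no longer grouping-free, since distinct faces of $\ass_n$ may become normally equivalent. I expect this to be the main potential obstacle, but it causes no real difficulty, because a character is by definition constant on both isomorphism and normal-equivalence classes; evaluating $\zeta$ term by term over the (possibly repeated) faces is therefore legitimate, and the geometric sum over faces is converted cleanly into the arithmetic sum over the monomials $a_F$. The only remaining item is bookkeeping: a face corresponding to a noncrossing partition with $k$ blocks has $\dim F=n-k$, so $(-1)^{n-\dim F}=(-1)^k$ records the number of blocks, matching the sign conventions of Theorem~\ref{t:apode_paths_cat} and confirming that the polyhedral count reproduces the character-theoretic one.
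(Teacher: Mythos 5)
Your proposal is correct and takes essentially the same approach as the paper: this statement is quoted from \citep[Theorem 11.3]{aa17}, and the paper's own proof of the cyclohedron analogue (Theorem~\ref{t:cyc_inv}) runs exactly along your lines---realize the series as a character $\zeta$, use Theorem~\ref{t:inverse_chars} to write $b_n=(\zeta\circ\apode)(\ass_n)$, expand the antipode as a signed sum over faces, and evaluate by multiplicativity and isomorphism-invariance of $\zeta$. Your two points of care are also the right ones: the descent of Theorem~\ref{t:apode_GP} to $\opp{\GP}$ costs only grouping-freeness, which is harmless since characters are constant on normal-equivalence classes, and the sign bookkeeping via $\dim F_t = n - |t|$ matches the paper's conventions.
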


\begin{example}\label{ex:Linv_Echar}
A direct computation shows that the compositional inverse of
$$
\dfrac{x}{1-x} = x + x^2 + x^3 + x^4 + \dots
\qquad \text{is} \qquad
\dfrac{x}{1+x} = x - x^2 + x^3 - x^4 + \dots
$$
The identity~\eqref{eq:polyt_Linv} in this case reads
$$
(-1)^n = (-1)^n \sum_{F \leq \ass_n} (-1)^{\dim F}.
$$
The sum on the right hand side is computing the Euler characteristic of~$\ass_n$, which we know is~$1$.
\end{example}

The compositional inverse of a power series can be described with a formula having fewer terms. This formula can be obtained by grouping the terms in~\eqref{eq:polyt_Linv} corresponding to isomorphic faces of~$\ass_n$, or equivalently, by using the description of the antipode of paths in Theorem~\ref{t:apode_paths_cat}. In doing so, we obtain the following formula for the coefficients of~$g^{\langle -1 \rangle}$.

\begin{theorem}
The coefficients of the compositional inverse $g^{\langle -1 \rangle}$ of $g(x)$ are determined by
\begin{equation}\label{eq:Cat_Linv}
b_n = \sum_{\pi \in NC(\path_n)} (-1)^{|\pi|}C_{(\opp{\pi}:\pi)}a_{|\pi_1|} a_{|\pi_2|} \dots a_{|\pi_r|}
\end{equation}
\end{theorem}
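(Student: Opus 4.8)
The plan is to obtain~\eqref{eq:Cat_Linv} by feeding the grouping-free antipode formula for paths (Theorem~\ref{t:apode_paths_cat}) into the dictionary between characters of $\rmF$ and power series. First I would fix the character $\zeta \in \bbX(\rmF)$ that corresponds to $g$ under the isomorphism of Theorem~\ref{t:chars_F}, so that $a_n = \zeta(\path_n)$ and $g(x) = x + \sum_{n \geq 1} a_n x^{n+1}$. Because that isomorphism carries the convolution product of characters to composition of power series, it carries the convolution inverse $\zeta^{*-1}$ to the compositional inverse $g^{\langle -1 \rangle}$; in particular $b_n = \zeta^{*-1}(\path_n)$.

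Next I would apply Theorem~\ref{t:inverse_chars}, which expresses the inverse character as $\zeta^{*-1} = \zeta \circ \apode$, to rewrite this as
$$
b_n = \zeta\big(\apode_{[n]}(\path_n)\big).
$$
Substituting the antipode formula of Theorem~\ref{t:apode_paths_cat} and using linearity of $\zeta$ then gives
$$
b_n = \sum_{\pi \in NC(\path_n)} (-1)^{|\pi|}\, C_{(\opp{\pi}:\pi)}\, \zeta\big(\path_n(\pi)\big).
$$
Since $\path_n(\pi) = \path_n|_{\pi_1} \sqcup \dots \sqcup \path_n|_{\pi_r}$ and $\zeta$ is multiplicative, the last factor splits as $\zeta(\path_n(\pi)) = \prod_i \zeta(\path_n|_{\pi_i})$; and because $\zeta$ is constant on isomorphism classes and $\path_n|_{\pi_i} \cong \path_{|\pi_i|}$, each $\zeta(\path_n|_{\pi_i})$ equals $a_{|\pi_i|}$. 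This is exactly~\eqref{eq:Cat_Linv}.

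The one point requiring care is the verification that the correspondence of Theorem~\ref{t:chars_F} is a group isomorphism with the stated indexing, so that inversion of characters really does compute the compositional inverse and $b_n = \zeta^{*-1}(\path_n)$; everything else is a direct substitution. An equivalent route, as suggested before the statement, would instead start from~\eqref{eq:polyt_Linv} and group the faces of $\ass_n$ by their associated noncrossing partition: the faces $F_t$ with $\pi(t) = \pi$ all satisfy $F_t \cong \ass_{|\pi_1|} \times \dots \times \ass_{|\pi_r|}$, so they contribute the same monomial $a_{|\pi_1|}\dots a_{|\pi_r|}$, and one is left to show that the signed count $\sum_{t : \pi(t) = \pi} (-1)^{|t|}$ equals $(-1)^{|\pi|} C_{(\opp{\pi}:\pi)}$. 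That signed count is the genuine combinatorial heart of the identity, but it is already established inside the proof of Theorem~\ref{t:apode_paths_cat}; routing through the antipode formula lets me invoke it as a black box, which is why I prefer the character-theoretic argument.
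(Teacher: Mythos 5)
Your proposal is correct and follows the same route the paper itself indicates: the paper justifies~\eqref{eq:Cat_Linv} precisely by grouping the terms of~\eqref{eq:polyt_Linv} ``or equivalently, by using the description of the antipode of paths in Theorem~\ref{t:apode_paths_cat}'', and your character-theoretic argument ($b_n = \zeta^{*-1}(\path_n) = \zeta(\apode_{[n]}(\path_n))$ via Theorems~\ref{t:chars_F}, \ref{t:inverse_chars}, and~\ref{t:apode_paths_cat}, then multiplicativity and isomorphism-invariance of~$\zeta$) is a correct, fully spelled-out version of the second option. Your observation that the signed count $\sum_{t:\pi(t)=\pi}(-1)^{|t|} = (-1)^{|\pi|}C_{(\opp{\pi}:\pi)}$ is the combinatorial core, legitimately invoked as a black box since Theorem~\ref{t:apode_paths_cat} is quoted from Aguiar--Ardila, is also accurate.
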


\begin{example}
We can explicitly verify that the compositional inverse of
$$
e^x - 1 = x + \dfrac{x^2}{2!} + \dfrac{x^3}{3!} + \dfrac{x^4}{4!} + \dots
\qquad \text{is} \qquad
\ln(1+x) = x - \dfrac{x^2}{2} + \dfrac{x^3}{3} - \dfrac{x^4}{4!} + \dots
$$
Then, the expression~\eqref{eq:Cat_Linv} yields the following combinatorial formula involving Catalan numbers:
$$
\sum_{\pi \in NC(\path_n)} (-1)^{|\pi|} \dfrac{C_{(\opp{\pi}:\pi)}}{(|\pi_1|+1)! \, (|\pi_2|+1)! \dots (|\pi_r|+1)!} = \dfrac{(-1)^n}{n+1}.
$$
For instance,~$\{1,2,3\}$,~$\{12,3\}$,~$\{13,2\}$,~$\{1,23\}$,~$\{123\}$ are all the (noncrossing) partitions of~$[3]$. Listing the terms in that order, the previous formula reads
$$
- \dfrac{5}{(2!)^3} + \dfrac{2}{3!\, 2!} + \dfrac{1}{3!\, 2!} + \dfrac{2}{3!\, 2!} - \dfrac{1}{4!} = - \dfrac{1}{4}.
$$
\end{example}

We proceed to extend formulas~\eqref{eq:polyt_Linv} and~\eqref{eq:Cat_Linv} to pairs of power series in~$\bbX(\hc) \cong G$. For the analogous of formula~\eqref{eq:polyt_Linv}, we would expect to obtain a sum over the faces of the cyclohedron, but there is one small caveat. Let us consider, for example, a two-dimensional face~$F$ of~$\cyc_5$ that is a (combinatorial) square.
$F$ is isomorphic to both~$\cyc_2 \times \ass_2 \times \ass_1$ and~$\cyc_1 \times \ass_2 \times \ass_2$,
so how do we determine if the corresponding term~$c_F$ in the analogous formula is~$c_2a_2a_1$ or~$c_1a_2^2$?
The following result answers this question.

\begin{theorem}\label{t:cyc_inv}
The inverse of the pair
$$
\Big(
x + a_1 x^2 + a_2 x^3 + \dots
\, , \,
c_1 x + c_2 \dfrac{x^2}{2} + c_3 \dfrac{x^3}{3} + \dots
\Big)
$$
in the group~$G$ is
$$
\Big(
x + b_1 x^2 + b_2 x^3 + \dots
\, , \,
d_1 x + d_2 \dfrac{x^2}{2} + d_3 \dfrac{x^3}{3} + \dots
\Big),
$$
where~$b_n$ is determined by the faces of the associahedron~$\ass_n$ as in~\eqref{eq:polyt_Linv},
\begin{equation}\label{eq:cyc_inv}
d_n = \sum_{F \leq \cyc_n} (-1)^{n - \dim F} c_F,
\end{equation}
and we write~$c_F = c_{f_0} a_{f_1} \dots a_{f_k}$ for each face~$F \cong \cyc_{f_0} \times \ass_{f_1} \times \dots \times \ass_{f_k}$.
The Cartesian factor~$\cyc_{f_0}$ of~$F$

is determined by having vertices with one coordinate attaining the value~${n \choose 2} + 1$.
\end{theorem}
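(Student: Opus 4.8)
The strategy is to compute the inverse $(g,h)^{-1} = (g^{\langle -1\rangle}, -h\circ g^{\langle -1\rangle})$ given in Section~\ref{s:ass_cyc_ps} by way of the antipode, and then translate the antipode formula for cycles into a sum over faces of the cyclohedron. The first coordinate $b_n$ is already handled by Theorem~\ref{t:polyt_Linv}, so the real content is formula~\eqref{eq:cyc_inv} for the second coordinate. By Theorem~\ref{t:inverse_chars}, if $\zeta$ is the character corresponding to the pair $(g,h)$, its inverse is $\zeta \circ \apode$; evaluating on the standard cycle $\cycle_n$ and dividing by $n$ gives the coefficient $d_n$ of $x^n/n$ in the second series. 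Thus I would begin by writing
$$
\dfrac{d_n}{n} = \zeta_{[n]}\big(\apode_{[n]}(\cycle_n)\big)\cdot\dfrac{1}{n},
$$
and then insert the cancellation-free tubing formula from Theorem~\ref{t:apode_cycles_tub}, namely $\apode_{[n]}(\cycle_n) = \sum_t (-1)^{|t|}\cycle_n(t)$.

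Next I would apply the multiplicativity of $\zeta$ to each term. Using the face description established in the bulleted list preceding this theorem, each tubing $t$ of $\cycle_n$ corresponds to a face $F_t \leq \cyc_n$ with $\dim F_t = |I| - |t|$, and $F_t$ is normally equivalent to $\cyc_{|\pi_0|}\times\ass_{|\pi_1|}\times\dots\times\ass_{|\pi_k|}$, where $(\pi_0,\{\pi_1,\dots,\pi_k\})$ is the pointed noncrossing partition $\pi(t)$. Since $\zeta$ is multiplicative and $\cycle_n(t)$ is the disjoint union of a cycle on $\pi_0$ and paths on the $\pi_i$, evaluating $\zeta$ on $\cycle_n(t)$ yields exactly $c_{|\pi_0|}\,a_{|\pi_1|}\cdots a_{|\pi_k|} = c_{F_t}$. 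Combining this with $(-1)^{|t|} = (-1)^{n-\dim F_t}$ and noting that the correspondence between tubings and faces is a bijection, the sum over tubings becomes precisely $\sum_{F\leq\cyc_n}(-1)^{n-\dim F}c_F$, which is~\eqref{eq:cyc_inv}. I would double-check the sign bookkeeping here, since the $1/n$ factor and the definition $|\pi| = 1+|\pi_+|$ must be reconciled with the antipode's sign.

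The remaining, and genuinely subtle, point is the well-definedness of the notation $c_F$: a single normal-equivalence class of face may admit several descriptions as a product $\cyc_{f_0}\times\ass_{f_1}\times\cdots$, as the square face of $\cyc_5$ illustrates. To resolve which factor is the cyclohedron, I would use the explicit linear-functional description of $F_t$ from Section~\ref{ss:g_ass}: the face is maximized by $-\sum_i n_i x_i$ where $n_i$ counts tubes of $t$ containing vertex $i$. The zero block $\pi_0$ consists of vertices lying only in the maximal tube $[n]\in t$, so these have the \emph{minimal} tube count $n_i = 1$; equivalently, along $F_t$ the coordinates indexed by $\pi_0$ attain the maximal value of $\sum_{i\in\pi_0} x_i$ over the face. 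I would then verify that this maximal value equals $\binom{n}{2}+1$, matching the stated criterion, by the same tube-counting argument used in the associahedron example (the value of a coordinate sum along a face equals the number of tubes meeting the relevant vertex set). The main obstacle is precisely pinning down this numerical invariant intrinsically, independently of the tubing, so that it characterizes the cyclohedral factor purely in terms of the geometry of the face class in $\opp{\GP}$; once the value $\binom{n}{2}+1$ is confirmed to single out $\pi_0$, the ambiguity is removed and the formula is fully determined.
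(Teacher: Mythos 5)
Your derivation of formula~\eqref{eq:cyc_inv} itself coincides with the paper's proof: both pass from $d_n = \zeta_{[n]}\big(\apode_{[n]}(\cycle_n)\big)$ (Theorem~\ref{t:inverse_chars}) through the tubing formula of Theorem~\ref{t:apode_cycles_tub}, the multiplicativity of $\zeta$, and the bijection $t \mapsto F_t$ with $\dim F_t = n - |t|$, giving $(-1)^{|t|} = (-1)^{n-\dim F_t}$. (Your worry about reconciling the $1/n$ is vacuous: by the definition of the isomorphism in Theorem~\ref{t:chars_hc}, $d_n$ is exactly $\zeta^{*-1}_{[n]}(\cycle_n)$, since the series is written with coefficients $d_n \tfrac{x^n}{n}$; no extra factor appears.) Your greedy step is also the paper's: $n_i = 1$ precisely on $\pi_0$, so by Proposition~\ref{p:greedy} the face $F_t$ lies in the face of $\cyc_n$ maximized by $\sum_{i\in\pi_0} x_i$.

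The genuine gap is in the final step, which you flag as ``the main obstacle'' and then leave as an unexecuted verification — and the sketch you give of it would not close it. First, you conflate the sum functional with individual coordinates: the rule ``the value of a coordinate sum along a face equals the number of tubes meeting the relevant vertex set,'' applied to $\sum_{i\in\pi_0} x_i$, yields the number of tubes of $\cycle_n$ meeting $\pi_0$, which is in general much larger than $\binom{n}{2}+1$; the value $\binom{n}{2}+1$ counts tubes containing a \emph{single} vertex, so the criterion concerns the individual coordinates $x_l$ restricted to the face. Second, and more seriously, showing that coordinates in $\pi_0$ can attain $\binom{n}{2}+1$ proves nothing by itself: by cyclic symmetry \emph{every} coordinate $x_m$ attains $\binom{n}{2}+1$ at some vertex of $\cyc_n$, so the entire content of well-definedness is the strict inequality $x_m < \binom{n}{2}+1$ throughout $F$ for $m \notin \pi_0$, which your proposal never addresses. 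The paper supplies exactly this: it computes the face of $\cyc_n$ maximized by $\sum_{i\in\pi_0} e_i$ explicitly as the Minkowski sum $\Delta_{\pi_0} + \sum_{[i,j]\cap\pi_0\neq\emptyset}\Delta_{[i,j]\cap\pi_0} + \sum_{[i,j]\cap\pi_0=\emptyset}\Delta_{[i,j]}$ over proper intervals, on which $\max x_l = \big(n(n-1)+1\big) - \binom{n}{2} = \binom{n}{2}+1$ for $l \in \pi_0$, whereas for $m \notin \pi_0$ the maximum of $x_m$ counts only the intervals through $m$ disjoint from the nonempty set $\pi_0$, hence is strictly smaller. Without some such explicit face computation, the notation $c_F$ in~\eqref{eq:cyc_inv} is not well defined — precisely the ambiguity your own $\cyc_5$ square example exhibits.
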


\begin{proof}
The statement about the coefficients~$b_n$ follows directly from Theorem~\ref{t:polyt_Linv}.Let~$\zeta \in \bbX(\hc)$ be the character corresponding to the pair~$(x + \sum a_n x^{n+1} , \sum c_n \tfrac{x^n}{n})$. Then, using Theorem~\ref{t:inverse_chars} we have
$$
d_n = (\zeta_{[n]} \circ \apode_{[n]})(\cycle_n).
$$
Using Theorem~\ref{t:apode_cycles_tub} to compute the antipode of~$\cycle_n$, and the linearity and multiplicativity of~$\zeta_I$, we get
$$
\begin{array}{RL}
d_n
&= \sum_t (-1)^{|t|} \zeta_{[n]}(\cycle_n(t)) \\
&= \sum_t (-1)^{|t|} \zeta_{\pi_0}(\cycle^0)\zeta_{\pi_1}(\path^1)\dots\zeta_{\pi_k}(\path^k)\\
&= \sum_t (-1)^{|t|} c_{|\pi_0|}a_{|\pi_1|} \dots a_{|\pi_k|}.
\end{array}
$$
The sums are over all tubings of~$\cycle_n$, and~$(\pi_0,\{\pi_1,\dots,\pi_k\}) = \pi(t)$.
The bijection between tubings of~$\cycle_n$ and faces of~$\cyc_n = \ass_{\cycle_n}$ allows us to rewrite
$$
(-1)^{|t|} c_{|\pi_0|}a_{|\pi_1|} \dots a_{|\pi_k|} = (-1)^{n - \dim F_t} c_{f_0}a_{f_1} \dots a_{f_k},
$$
where~$f_i = |\pi_i|$ and~$F_t \equiv \ass_{\cycle^0} \times \ass_{\path^1} \times \dots \times \ass_{\path^k} \cong \cyc_{f_0} \times \ass_{f_1} \times \dots \times \ass_{f_k}$.
Recall that~$F_t \cong \cyc_{f_0} \times \ass_{f_1} \times \dots \times \ass_{f_k}$ is the face of~$\cyc_n$ maximized by the linear functional~$- \sum_i n_i x_i$, where~$n_i$ is the number of tubes of~$t$ containing~$i \in [n]$.
Property~\ref{p:greedy} implies that~$F_t$ is contained in the face of~$\cyc_n$ maximized by~$\sum_{i \in \pi_0} x_i$.
This face is
$$
\Delta_{\pi_0} + \sum_{[i,j] \cap \pi_0 \neq \emptyset} \Delta_{[i,j] \cap \pi_0} + \sum_{[i,j] \cap \pi_0 = \emptyset} \Delta_{[i,j]}
$$
where the sums are over proper intervals~$[i,j] = \{i,i+1,\dots,j\}$ with~$j \neq i-1$, the operations are modulo~$n$.
For~$l \in \pi_0$, the maximum value of~$x_l$ in this face counts the number of tubes of~$\cyc_n$ containing~$l$:
\begin{equation}\label{eq:aux_max_face_cyc}
\#\{\text{tubes of } \cyc_n\} - \#\{\text{tubes not containing } l\} = \big( n(n-1) + 1 \big) - {n \choose 2} = {n \choose 2} + 1.
\end{equation}
On the other hand, for~$m \notin \pi_0$, the maximum value of~$x_m$ in this face counts the number of intervals~$[i,j]$ containing~$m$ that do not intersect~$\pi_0$.
Since~$\pi_0 \neq \emptyset$, this is strictly less than~\eqref{eq:aux_max_face_cyc}.
Therefore~$\pi_0$, and thus the Cartesian factor~$\cyc_{f_0}$, is determined by consisting of those coordinates that reach the value~${n \choose 2} + 1$ at some vertex of~$F$.
\end{proof}

\begin{example}
Extending Example~\ref{ex:Linv_Echar}, consider the pair
$$
\Big(\dfrac{x}{1-x} \, , \, -\ln(1-x)\Big) =
\Big(
x + x^2 + x^3 + \dots
\, , \,
x + \dfrac{x^2}{2} + \dfrac{x^3}{3} + \dots
\Big),
$$
whose inverse in~$G$ is
$$
\Big(\dfrac{x}{1+x} \, , \, -\ln(1+x)\Big)
=
\Big(
x - x^2 + x^3 - \dots
\, , \,
-x + \dfrac{x^2}{2} - \dfrac{x^3}{3} + \dots
\Big)
.
$$
In this case, identity~\eqref{eq:cyc_inv} reflects that the Euler characteristic of the cyclohedron is~$1$.
\end{example}

By grouping terms in~\eqref{eq:cyc_inv}, or equivalently by applying the character~$\zeta$ in the proof of Theorem~\ref{t:cyc_inv} to both sides of~\eqref{eq:apode_cycles_Cat}, we obtain a formula of the coefficients~$d_n$ with fewer terms.

\begin{theorem}
The coefficients of the second component of $(g,h)^{-1}$ for $(g,h) \in G$ are determined by
\begin{equation}\label{eq:Cat_cyc_inv}
d_n = \sum_{\pi \in PNC(\cycle_n)} (-1)^{|\pi|}C_{(\opp{\pi}_+:\pi_+)} c_{|\pi_0|} a_{|\pi_1|} a_{|\pi_2|} \dots a_{|\pi_r|}
\end{equation}
\end{theorem}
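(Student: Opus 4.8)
The plan is to rerun the proof of Theorem~\ref{t:cyc_inv} verbatim, but to feed in the grouping-free antipode formula~\eqref{eq:apode_cycles_Cat} in place of the tubing formula of Theorem~\ref{t:apode_cycles_tub}. Concretely, let $\zeta \in \bbX(\hc)$ be the character corresponding to the pair $(g,h) = \big(x + \sum a_n x^{n+1}, \sum c_n \tfrac{x^n}{n}\big)$, so that $a_n = \zeta(\path_n)$ and $c_n = \zeta(\cycle_n)$. Since the map $\bbX(\hc) \to G$ of Theorem~\ref{t:chars_hc} is a group isomorphism, it carries the convolution inverse $\zeta^{*-1}$ to $(g,h)^{-1}$; matching the $x^n/n$ normalization in the second coordinate on both sides gives $d_n = \zeta^{*-1}(\cycle_n)$. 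By Theorem~\ref{t:inverse_chars} this equals $\zeta_{[n]}\big(\apode_{[n]}(\cycle_n)\big)$.

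First I would substitute~\eqref{eq:apode_cycles_Cat} for $\apode_{[n]}(\cycle_n)$ and use the linearity of $\zeta_{[n]}$ to obtain
$$
d_n = \sum_{\pi \in PNC(\cycle_n)} (-1)^{|\pi|} C_{(\opp{\pi}_+:\pi_+)}\, \zeta_{[n]}\big(\cycle_n(\pi)\big).
$$
Then I would evaluate each term using that $\zeta$ is multiplicative and isomorphism-invariant. Writing $\pi = (\pi_0,\{\pi_1,\dots,\pi_r\})$, recall that $\cycle_n(\pi) = \cycle_n|_{\pi_0} \sqcup (\cycle_n/_{\pi_0})|_{\pi_1} \sqcup \dots \sqcup (\cycle_n/_{\pi_0})|_{\pi_r}$ is the disjoint union of a cycle on $|\pi_0|$ vertices and paths on $|\pi_1|,\dots,|\pi_r|$ vertices. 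Hence $\zeta_{[n]}(\cycle_n(\pi)) = c_{|\pi_0|}\, a_{|\pi_1|} \cdots a_{|\pi_r|}$, and substituting yields exactly~\eqref{eq:Cat_cyc_inv}. This is the same statement as the grouped form of~\eqref{eq:cyc_inv}, since the Catalan factor $C_{(\opp{\pi}_+:\pi_+)}$ already records the number of faces of $\cyc_n$ sharing the isomorphism type determined by $\pi$.

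There is essentially no obstacle here: the argument is a direct transcription of the computation in Theorem~\ref{t:cyc_inv} with one antipode formula swapped for another. The only point that requires care is the evaluation of $\zeta$ on the zero-block factor $\cycle_n|_{\pi_0}$: this component is genuinely a \emph{cycle} (a non-simple graph when $|\pi_0| \in \{1,2\}$), so $\zeta$ returns $c_{|\pi_0|}$ rather than $a_{|\pi_0|}$. Keeping the zero block marked throughout, exactly as in the definition of a pointed noncrossing partition, is what guarantees that the cyclic Catalan weight $C_{(\opp{\pi}_+:\pi_+)}$ appears (rather than the path weight $C_{(\opp{\pi}:\pi)}$) and that the correct coefficient $c_{|\pi_0|}$ is attached.
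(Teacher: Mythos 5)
Your proposal is correct and is exactly the paper's argument: the paper obtains~\eqref{eq:Cat_cyc_inv} precisely by applying the character~$\zeta$ from the proof of Theorem~\ref{t:cyc_inv} to both sides of the grouping-free antipode formula~\eqref{eq:apode_cycles_Cat} and using multiplicativity, with $\zeta(\cycle_n(\pi)) = c_{|\pi_0|}a_{|\pi_1|}\cdots a_{|\pi_r|}$. Your closing remark is only slightly loose --- $C_{(\opp{\pi}_+:\pi_+)}$ counts the tubings (faces) sharing the same pointed noncrossing partition $\pi$, not merely the same isomorphism type --- but this aside carries no weight in the argument.
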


We conclude by using formula~\eqref{eq:Cat_cyc_inv} to deduce some enumerative identities involving pointed noncrossing partitions and Catalan numbers.

\begin{example}
Consider the pair
$$
\Big(\dfrac{x}{1-x} \, , \, \dfrac{x}{1-x}\Big)=
\Big(
x + x^2 + x^3 + \dots
\, , \,
x + 2\dfrac{x^2}{2} + 3\dfrac{x^3}{3} + \dots
\Big),
$$
and its inverse
$$
\Big(\dfrac{x}{1+x} \, , \, - x\Big)
=
\Big(
x - x^2 + x^3 - \dots
\, , \,
-x
\Big)
.
$$
In this case, identity~\eqref{eq:Cat_cyc_inv} translates into the following identity involving pointed noncrossing partitions and Catalan numbers:
$$
\sum_{\pi \in PNC(\cycle_n)} (-1)^{|\pi_+|} |\pi_0| C_{(\opp{\pi}_+:\pi_+)}
= \begin{cases}
1 & \text{if } n = 1,\\
0 & \text{otherwise}.
\end{cases}
$$
\end{example}

\begin{example}
A similar analysis with the pair~$\big(e^x -1 \, , \, e^x -1 \big)$ and its inverse~$\big( \ln(1+x) \, , \, -x \big)$ yields the following combinatorial identity:
$$
\sum_{\pi \in PNC(\cycle_n)} (-1)^{|\pi_+|} \dfrac{C_{(\opp{\pi}_+:\pi_+)}}{(|\pi_0|-1)! \, (|\pi_1|+1)! \dots (|\pi_r|+1)!}
= \begin{cases}
1 & \text{if } n = 1,\\
0 & \text{otherwise}.
\end{cases}
$$
\end{example}

\bibliographystyle{amsalpha}
\bibliography{../../bib}

\providecommand{\bysame}{\leavevmode\hbox to3em{\hrulefill}\thinspace}
\providecommand{\MR}{\relax\ifhmode\unskip\space\fi MR }
% \MRhref is called by the amsart/book/proc definition of \MR.
\providecommand{\MRhref}[2]{%
  \href{http://www.ams.org/mathscinet-getitem?mr=#1}{#2}
}
\providecommand{\href}[2]{#2}
\begin{thebibliography}{PRW08}

\bibitem[AA17]{aa17}
Marcelo Aguiar and Federico Ardila, \emph{Hopf monoids and generalized
  permutahedra}, arXiv preprint arXiv:1709.07504 (2017).

\bibitem[AM10]{am10}
Marcelo Aguiar and Swapneel Mahajan, \emph{Monoidal functors, species and
  {H}opf algebras}, CRM Monograph Series, vol.~29, American Mathematical
  Society, Providence, RI, 2010, With forewords by Kenneth Brown and Stephen
  Chase and Andr{\'e} Joyal.

\bibitem[AM13]{am13}
\bysame, \emph{Hopf monoids in the category of species}, Hopf algebras and
  tensor categories, Contemp. Math., vol. 585, American Mathematical Society,
  Providence, RI, 2013, pp.~17--124.

\bibitem[CD06]{cd06grass}
Michael~P. Carr and Satyan~L. Devadoss, \emph{Coxeter complexes and
  graph-associahedra}, Topology Appl. \textbf{153} (2006), no.~12, 2155--2168.

\bibitem[Edm70]{edmonds}
Jack Edmonds, \emph{Submodular functions, matroids, and certain polyhedra},
  Combinatorial {S}tructures and their {A}pplications ({P}roc. {C}algary
  {I}nternat. {C}onf., {C}algary, {A}lta., 1969), Gordon and Breach, New York,
  1970, pp.~69--87.

\bibitem[Joy81]{joyal}
Andr{\'e} Joyal, \emph{Une th{\'e}orie combinatoire des s{\'e}ries formelles},
  Adv. in Math. \textbf{42} (1981), no.~1, 1--82.

\bibitem[Pos09]{postnikov09}
Alexander Postnikov, \emph{Permutohedra, associahedra, and beyond}, Int. Math.
  Res. Not. IMRN (2009), no.~6, 1026--1106.

\bibitem[PRW08]{prw08faces}
Alex Postnikov, Victor Reiner, and Lauren Williams, \emph{Faces of generalized
  permutohedra}, Doc. Math. \textbf{13} (2008), 207--273.

\bibitem[Rio58]{riordan}
John Riordan, \emph{An introduction to combinatorial analysis}, Wiley
  Publications in Mathematical Statistics, John Wiley \& Sons, Inc., New York;
  Chapman \& Hall, Ltd., London, 1958.

\end{thebibliography}

\end{document}